\newcommand{\nc}{\newcommand}
\nc{\Glr}{\mathrm{GL}_n(\mathbb{R})}   \nc{\Glc}{\mathrm{GL}_n(\mathbb{C})}
\nc{\euler}{{\rm e}}
\nc{\ip}{\langle \cdot , \cdot \rangle}
\nc{\ipd}{\langle \hspace{-0.5mm}\langle \cdot , \cdot \rangle\hspace{-0.5mm}\rangle}
\nc{\ippd}{( \hspace{-0.5mm} ( \cdot , \cdot ) \hspace{-0.5mm} )}
\nc{\ipp}{(      \cdot , \cdot       )}
\nc{\la}{\langle} \nc{\ra}{\rangle}
\newcommand{\ipa}[2]{\langle {#1} , {#2} \rangle}
\newcommand{\ipda}[2]{\langle \hspace{-0.5mm}\langle {#1} , {#2} \rangle\hspace{-0.5mm}\rangle}
\theoremstyle{plain}
\newtheorem{theorem}{Theorem}[section]
\newtheorem{proposition}[theorem]{Proposition}
\newtheorem{corollary}[theorem]{Corollary}
\newtheorem{lemma}[theorem]{Lemma}
\theoremstyle{definition}
\newtheorem{definition}[theorem]{Definition}
\theoremstyle{remark}
\newtheorem{remark}[theorem]{Remark}
\newtheorem{example}[theorem]{Example}
\numberwithin{equation}{section}
\title{Anti-K\"ahlerian geometry on Lie groups}
\author{Edison Alberto Fern\'{a}ndez-Culma$^\dagger$}
\thanks{$^\dagger$Fully supported by CONICET (Argentina).}
\author{Yamile Godoy$^\ddagger$}
\thanks{$^\ddagger$Partially supported by \textsc{Conicet} (PIP
	112-2011-01-00670), \textsc{Foncyt} (PICT 2010 cat 1 proyecto 1716) \textsc{Secyt Univ.\thinspace Nac.\thinspace C\'{o}rdoba}.}
\address{Current affiliation: \newline \indent CIEM-CONICET \& FaMAF-Universidad Nacional de C\'ordoba, \newline \indent Ciudad Universitaria, \newline \indent (5000) C\'ordoba, \newline \indent Argentina}
\email{\{efernandez, ygodoy\}@famaf.unc.edu.ar}
\subjclass[2010]{ }
\keywords{ anti-Hermitian geometry, Norden metrics, B-manifolds, anti-K\"{a}hler manifold, Lie groups}
\begin{document}

\maketitle

\begin{abstract}
Let $G$ be a Lie group of even dimension and let $(g,J)$ be a left invariant anti-K\"ahler structure on $G$. In this article we study anti-K\"{a}hler structures considering the distinguished cases where the  complex structure $J$ is abelian or bi-invariant. We find that if $G$ admits a left invariant anti-K\"{a}hler structure $(g,J)$ where $J$ is abelian then the Lie algebra of $G$ is unimodular and $(G,g)$ is a flat pseudo-Riemannian manifold. For the second case, we see that for any left invariant metric $g$ for which $J$ is an anti-isometry we obtain that the triple $(G, g, J)$ is an anti-K\"ahler manifold.

Besides, given a left invariant anti-Hermitian structure on $G$ we associate a covariant $3$-tensor $\theta$ on its Lie algebra and prove that such structure is anti-K\"{a}hler if and only if $\theta$ is a skew-symmetric and pure tensor. From this tensor we classify the real 4-dimensional Lie algebras for which the corresponding Lie group has a left invariant anti-K\"{a}hler structure and study the moduli spaces of such structures (up to group isomorphisms that preserve the anti-K\"{a}hler structures).
\end{abstract}


\section{Introduction}

Anti-Hermitian geometry can be considered as a counterpart of Hermitian geometry: an almost
anti-Hermitian manifold is a triple $(M,g,J)$, where $(M,g)$ is a pseudo-Riemannian manifold
and $J$ is an almost complex structure on $M$ such that $J$ is symmetric for $g$. The idea
of an almost anti-Hermitian manifold goes back at least as far as \cite[\S 2]{GribachevMekerovDjelepov},
where such a manifold is called a \textit{generalized $B$-manifold}. In the literature, other names are also used
for this class of manifolds:  \textit{Norden Manifolds} \cite{CastroHervellaGarciaRio} or \textit{almost complex manifolds with a Norden metric} \cite{GanchevBorisov}, in honour to the Russian mathematician Aleksandr P. Norden.

Since 1985, anti-Hermitian geometry has been very extensively studied, and continues to be a subject of intense interest
in complex geometry and mathematical physics (\cite{BorowiecFerrarisFrancavigliaVolovich, BorowiecFrancavigliaVolovich}).
Many contributions to the field have been made by the Bulgarian geometry school
(for instance, see \cite{GribachevMekerovDjelepov,GanchevBorisov,Manev1,Mekerov2, Mekerov3, Teofilova5}).

In \cite{GribachevMekerovDjelepov} it was given a division of almost anti-Hermitian geometry
in $8$ types of geometries by means of representation theory of $O(n,n) \cap GL(n,\mathbb{C}) \cong O(n,\mathbb{C})$.
By following such result, the fundamental class of almost anti-Hermitian
manifolds is the family of \textit{anti-K\"{a}hler manifolds} (also known as \textit{$B$-manifold} \cite[\S 2]{GribachevMekerovDjelepov}, \textit{K\"{a}hlerian manifold with a Norden metric} \cite{GanchevBorisov} or \textit{K\"{a}hler-Norden manifold} \cite{Sluka}), which are anti-Hermitian manifolds
with a parallel complex structure.

Other strong motivation to study anti-K\"{a}hler manifolds comes from
the work of the physicists-mathematicians Andrzej Borowiec, Mauro Francaviglia,
Marco Ferraris and Igor Volovich in \cite{BorowiecFerrarisFrancavigliaVolovich,BorowiecFrancavigliaVolovich},
where it is proved that there is a one-to-one correspondence between (Einstein) holomorphic-Riemannian manifolds and
(Einstein) anti-K\"{a}hler manifolds \cite[Theorem 2.2, Theorem 5.1]{BorowiecFrancavigliaVolovich}.
The curvature properties of anti-K\"{a}hler manifolds have been studied by Karina Olszak (formerly known as Karina S{\l}uka)
in \cite{Sluka,Sluka2,Olszakk} and by Arif Salimov and Murat \.{I}{\c{s}}can in \cite{IscanSalimov}.
Spinor geometry and geometric analysis on anti-K\"{a}hler manifolds have been considered by
Nedim De{\u{g}}irmenci and {\c{S}}enay Karapazar in \cite{DegirmenciKarapazar1,DegirmenciKarapazar2}.
Very rencently, Antonella Nannicini has studied the generalized geometry of anti-Hermitian manifolds in \cite{Nannicini} where she builds complex Lie algebroids over anti-K\"{a}hler manifolds.

This work is intended as an attempt to study anti-K\"{a}hler geometry on Lie groups
and to motivate new properties of anti-K\"{a}hler manifolds. In this paper, we focus on anti-K\"ahler structures on Lie groups in the left invariant setting.
In the complex geometry of Lie groups, we have two distinguished classes of left invariant complex structures, namely,
abelian and bi-invariant complex structures.  We study anti-K\"{a}hler structures with complex structures in each class.

In Section 3 we deal with the case when $J$ is a bi-invariant complex structure  on a Lie group $G$. In such case, we see that if $J$ is an anti-isometry of a left invariant metric $g$, the triple $(G, g, J)$ is an anti-K\"ahler manifold. Besides, we prove a sort of converse of \cite[Proposition 3.3]{Teofilova5} which states that a semisimple Lie group $G$ admitting a bi-invariant complex structure $J$ satisfies that $(G, g, J)$ is an anti-K\"ahler-Einstein manifold with non-vanishing cosmological constant, where $g$ is the bi-invariant metric on $G$ induced by the Killing form of the Lie algebra of $G$.

In Section 4 we study left invariant anti-K\"{a}hler structures on Lie groups whose complex structure is abelian. In this context we find that if a Lie group $G$ admits a left invariant anti-K\"{a}hler structure $(g,J)$, where $J$ is an abelian complex structure, then the Lie algebra of $G$ is unimodular and $(G,g)$ is a flat pseudo-Riemannian manifold.

In Section 5, given a left invariant anti-Hermitian structure on a Lie group $G$, we associate a covariant $3$-tensor $\theta$ on its Lie algebra $\mathfrak{g}$ and prove that such structure is anti-K\"{a}hler if and only if $\theta$ is a skew-symmetric and pure tensor. The tensor $\theta$
allows us to define a subclass of \textit{anti-K\"{a}hler Lie groups} formed by those one whose $3$-tensor $\theta$ vanishes.
Any four dimensional Lie group with left invariant anti-K\"{a}hler structure belongs to such family, and so, in Section 6, we are concerned with the classification of such Lie groups and prove that there are exactly two non-abelian Lie algebras in dimension $4$ (up to isomorphism) admitting anti-K\"{a}hler structures. The end of Section 6 is devoted to study how many left invariant anti-K\"{a}hler structures a four dimensional simply connected Lie group can admit (up to group isomorphisms that preserve the anti-K\"{a}hler structure).

\subsection*{Acknowledgements}
The authors wish to extend their sincerest appreciation and thanks to Isabel Dotti and Marcos Salvai
for their corrections, comments and constructive criticisms.

\section{Preliminaries}

We start this section by giving the definition of an almost anti-Hermitian manifold.

\begin{definition}[Almost anti-Hermitian Manifold]
  An \textit{almost anti-Hermitian manifold} is a triple $(M,g,J)$, where $M$ is a differentiable manifold of real dimension $2n$,
  $J$ is an almost complex structure on $M$ and
  $g$ is an \textit{anti-Hermitian} metric on $(M,J)$, that is
\begin{eqnarray}\label{norden}
  g(JX,JY) &=& -g(X,Y),\, \forall X,Y \in \mathfrak{X}(M),
\end{eqnarray}
or equivalently, $J$ is symmetric with respect to $g$.

  If additionally $J$ is integrable, then the triple $(M,g,J)$
  is called an \textit{anti-Hermitian manifold} or \textit{complex Norden manifold}.
\end{definition}

\begin{remark}
If $(M,g,J)$ is an almost anti-Hermitian manifold, it is straightforward to check that the signature of $g$ is $(n,n)$, i.e.\ $g$ is a neutral metric.
\end{remark}

\begin{remark}\label{baseOrtonormal}
The linear (algebra) model of an almost anti-Hermitian manifold is given by a vector space $V$ of real dimension $2n$ with a linear complex structure
$J$ on $V$ ($J^2=-\operatorname{Id}$) and an inner product $\ip$ on $V$ such that $J$ is an anti-isometry of $(V,\ip)$; which, from now on, we shall call it
an \textit{anti-Hermitian vector space}. The action of $J$ on $V$ induces
a complex vector space structure on $V$ defined by $(a+\sqrt{-1}b)\cdot v := av + bJv$, for all $v \in V$.
Let $\ipd: V \times V \rightarrow \mathbb{C}$ given by:
\begin{eqnarray}
\ipda{v}{w}=\ipa{v}{w}-\sqrt{-1}\ipa{Jv}{w}.
\end{eqnarray}
It is straightforward to show that $\ipd$ is a $\mathbb{C}$-symmetric inner product on the complex space $(V,\mathbb{C})$ and even more
$\ip$ is the real part of $\ipd$.

By considering an orthonormal basis for the complex inner product space $(V,\ipd)$ (see \cite[The Basis Theorem 2.46.]{Harvey}), say $\{X_1,\,\ldots,\,X_n\}$, it follows that $\{X_1,\,JX_1,\,\ldots,\,X_n,JX_n\}$ is an orthonormal basis for the real inner product space $(V,\ip)$ (see \cite[Theorem 1.2]{GribachevMekerovDjelepov}, where such basis is called \textit{orthonormal $J$-basis}). And conversely, the real part (and the imaginary part) of a $\mathbb{C}$-symmetric inner product on a complex vector space $V$,
together with the linear complex structure $J$ given by $Jv:=\sqrt{-1}\cdot v$, define an anti-Hermitian vector space structure on $V$.
\end{remark}

\begin{remark}\label{igualdadgrupos}
  Let $(V,\ip,J)$ be an anti-Hermitian vector space and let
  \begin{eqnarray*}
  G:=\{T:V \rightarrow V : TJT^{-1}=J \mbox { and } \ipa{Tv}{Tw} = \ipa{v}{w},\, \forall v,w \in V \};
  \end{eqnarray*}
  i.e.\ $G$ is the intersection of the group of isometries of $(V,\ip)$ with the group that preserves the linear complex structure $J$.
  Because of Remark \ref{baseOrtonormal}, we have $G \cong \mathrm{O}(n,n) \cap \mathrm{GL}(n,\mathbb{C})$.
  Now, let us consider the complex inner product space $(V,\ipd)$ associated to $(V,\ip,J)$, just as in Remark \ref{baseOrtonormal}, and
  let
  \begin{eqnarray*}
  \widehat{G}&:=&\{T:V \rightarrow V : \ipda{Tv}{Tw} = \ipda{v}w\},
  \end{eqnarray*}
  the isometry group of $(V,\ipd)$. It follows from \cite[The Basis Theorem 2.46.]{Harvey}
  that $\widehat{G} \cong \mathrm{O}(n,\mathbb{C})$.
  An easy computation shows that the groups $G$ and $\widehat{G}$ are equal (and consequently, $\mathrm{O}(n,n) \cap \mathrm{GL}(n,\mathbb{C}) \cong \mathrm{O}(n,\mathbb{C})$).
\end{remark}

\begin{definition}[Anti-K\"{a}hler manifold]
An \textit{Anti-K\"{a}hler manifold}  is an almost anti-Hermitian manifold $(M,g,J)$ such that $J$ is parallel with respect to the Levi-Civita connection of the pseudo-Riemannian manifold $(M,g)$.
\end{definition}

Let $(M,g,J)$ be an almost anti-Hermitian manifold. From now on, let us denote by $\nabla$ the Levi-Civita connection of $(M,g)$
and we denote by $(\nabla_{X} J)$ the covariant derivative of $J$ in the direction of the vector field $X$. We recall that $(\nabla_{X} J)Y = \nabla_{X} JY - J \nabla_{X}Y$.

\begin{remark}
  Note that an anti-K\"{a}hler manifold $(M,g,J)$ satisfies that $J$ is integrable; it follows from the well known relation
  between the Nijenhuis tensor $N$ and the covariant derivative of $J$ with respect to the Levi-Civita connection of $(M,g)$
  \begin{eqnarray}\label{N}
N(X,Y) & := & [JX, JY] - J[JX,Y] -J[X,JY] - [X,Y]\\
 \nonumber           & = & (\nabla_{JX} J)Y -J(\nabla_{X} J)Y  -(\nabla_{JY} J)X + J(\nabla_{Y} J)X,
  \end{eqnarray}
 for all $X,Y$ in $\mathfrak{X}(M)$.
\end{remark}

The following lemma can be considered as an analogue to a well-known result in Hermitian geometry and it is a direct consequence of
the identity $X(g(JY,Z)) = X(g(Y,JZ))$ for all $X,Y,Z \in \mathfrak{X}(M)$ (since $J$ is symmetric with respect to $g$).

\begin{lemma}{\cite[Lemma 2.1.]{GribachevMekerovDjelepov}}\label{nablaJsimetrico}
  Let $(M,g,J)$ be an almost anti-Hermitian manifold. Then $(\nabla_X J)$ is a symmetric operator with respect to the metric $g$, i.e.
  \begin{eqnarray}
    g((\nabla_XJ)Y,Z) = g(Y, (\nabla_X J) Z),\, \forall X,Y,Z \in \mathfrak{X}(M).
  \end{eqnarray}
\end{lemma}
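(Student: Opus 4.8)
The plan is to deduce the symmetry of $(\nabla_X J)$ directly from the scalar identity stated just before the lemma, by differentiating that identity along $X$ and invoking the metric compatibility of the Levi-Civita connection. The only structural facts needed are that $J$ is $g$-symmetric, that is $g(JY,Z)=g(Y,JZ)$; that $\nabla$ is metric, so that $X(g(A,B))=g(\nabla_X A,B)+g(A,\nabla_X B)$; and the definition $(\nabla_X J)W=\nabla_X(JW)-J\nabla_X W$ recalled above.

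First I would start from $g(JY,Z)=g(Y,JZ)$ and apply the vector field $X$ to both sides. Expanding each side by metric compatibility produces four terms,
\begin{eqnarray*}
g(\nabla_X(JY),Z)+g(JY,\nabla_X Z)=g(\nabla_X Y,JZ)+g(Y,\nabla_X(JZ)).
\end{eqnarray*}
Next I would rewrite the two undifferentiated inner terms using the $g$-symmetry of $J$, namely $g(JY,\nabla_X Z)=g(Y,J\nabla_X Z)$ and $g(\nabla_X Y,JZ)=g(J\nabla_X Y,Z)$. Substituting these, then moving $g(J\nabla_X Y,Z)$ to the left and $g(Y,J\nabla_X Z)$ to the right, the equality becomes
\begin{eqnarray*}
g\bigl(\nabla_X(JY)-J\nabla_X Y,\,Z\bigr)=g\bigl(Y,\,\nabla_X(JZ)-J\nabla_X Z\bigr).
\end{eqnarray*}
Recognizing the two parenthesized expressions as $(\nabla_X J)Y$ and $(\nabla_X J)Z$ respectively yields exactly the claimed identity $g((\nabla_X J)Y,Z)=g(Y,(\nabla_X J)Z)$.

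I do not expect a genuine obstacle here, since the computation is short and linear. The one point requiring care is to apply the $g$-symmetry of $J$ to precisely the two terms in which $J$ has \emph{not} been differentiated, so that the remaining terms reassemble into covariant derivatives of $J$; pairing the terms the other way simply moves the $J$'s around without producing $(\nabla_X J)$ and leads nowhere.
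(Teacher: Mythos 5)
Your proof is correct and is precisely the argument the paper intends: it cites the identity $X(g(JY,Z))=X(g(Y,JZ))$ as the source of the lemma, and your differentiation via metric compatibility followed by reassembling $(\nabla_X J)$ is the standard way to make that one-line remark explicit. No gaps; the computation and the care you take in applying $g$-symmetry of $J$ only to the undifferentiated terms are exactly right.
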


\begin{proposition}\cite[Theorem 2.4.c]{GribachevMekerovDjelepov}\label{implicaJparalelo}
  Let $(M,g,J)$ be an almost anti-Hermitian manifold. Then, $(M,g,J)$ is an anti-K\"{a}hler manifold
  if and only if
  \begin{eqnarray}
    (\nabla_{JX}J) Y = \varepsilon J(\nabla_{X} J)Y,\, \forall X,Y \in \mathfrak{X}(M)
  \end{eqnarray}
  where $\varepsilon$ is a real constant.
\end{proposition}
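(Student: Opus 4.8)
The forward implication is immediate: if $(M,g,J)$ is anti-K\"ahler then $\nabla J \equiv 0$, so both $(\nabla_{JX}J)Y$ and $J(\nabla_X J)Y$ vanish and the displayed identity holds for every real $\varepsilon$. All the content lies in the converse, so the plan is to assume the identity $(\nabla_{JX}J)Y = \varepsilon J(\nabla_X J)Y$ for some fixed real constant $\varepsilon$ and deduce $\nabla J \equiv 0$.

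First I would record two structural identities for the operators $A_X := (\nabla_X J)$. By Lemma \ref{nablaJsimetrico} each $A_X$ is $g$-symmetric. Differentiating $J^2 = -\operatorname{Id}$ in the direction $X$ gives $(\nabla_X J)J + J(\nabla_X J) = 0$, i.e.\ $A_X J = -J A_X$, so $A_X$ anticommutes with $J$. I would then package these into the covariant $3$-tensor $F(X,Y,Z) := g(A_X Y, Z)$, for which Lemma \ref{nablaJsimetrico} reads $F(X,Y,Z) = F(X,Z,Y)$.

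The heart of the argument is to study the bilinear form $(Y,Z) \mapsto F(X,Y,JZ)$ and show that it is at once skew-symmetric and symmetric, hence zero. Skew-symmetry uses only the two structural identities: since $A_X$ is symmetric, $J$ is $g$-symmetric, and $A_X J = -J A_X$, one gets $F(X,Y,JZ) = g(Y, A_X JZ) = -g(A_X Z, JY) = -F(X,Z,JY)$. Symmetry uses the hypothesis: assuming $\varepsilon \neq 0$ it may be rewritten as $F(X,Y,JZ) = \varepsilon^{-1} F(JX,Y,Z)$, and applying the symmetry of $F$ in its last two arguments to $F(JX,Y,Z)$ yields $F(X,Y,JZ) = F(X,Z,JY)$. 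A real bilinear form that is both symmetric and skew-symmetric vanishes, so $F(X,Y,JZ) \equiv 0$; as $J$ is invertible this forces $F \equiv 0$, that is $\nabla J = 0$.

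The step I expect to be the real obstacle is making the argument insensitive to the value of $\varepsilon$. The naive move of substituting $JX$ for $X$ in the hypothesis only produces $(\varepsilon^2 - 1)A_X = 0$, which is inconclusive exactly when $\varepsilon = \pm 1$; it is the symmetric/skew-symmetric dichotomy above that sidesteps this degeneracy and works uniformly for all $\varepsilon \neq 0$. The remaining case $\varepsilon = 0$ must be handled separately, but it is easy: the hypothesis then reads $(\nabla_{JX}J)Y = 0$ for all $X,Y$, and since $X \mapsto JX$ is surjective this already gives $\nabla J = 0$.
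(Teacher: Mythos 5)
Your proof is correct and takes essentially the same route as the paper: both work with the $3$-tensor $\alpha(X,Y,Z)=g((\nabla_X J)Y,Z)$, combine its symmetry in the last two arguments (Lemma \ref{nablaJsimetrico}) with the anticommutation $(\nabla_X J)J=-J(\nabla_X J)$ and with the hypothesis, and force the tensor to vanish --- your symmetric-plus-skew dichotomy for $(Y,Z)\mapsto\alpha(X,Y,JZ)$ is the paper's four-step chain $\alpha(JX,Y,Z)=\cdots=-\alpha(JX,Y,Z)$ in different packaging. The only substantive difference is that the paper's arrangement never divides by $\varepsilon$ and so handles all values, including $\varepsilon=0$, uniformly, whereas your rewriting $F(X,Y,JZ)=\varepsilon^{-1}F(JX,Y,Z)$ requires (and you correctly supply) the separate trivial case $\varepsilon=0$.
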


\begin{proof}
Define $\alpha(X,Y,Z) = g ((\nabla_{X}J)Y,Z)$. By Lemma \ref{nablaJsimetrico}, $\alpha$ is a tensor which is symmetric  in the last two variables:
\begin{eqnarray}
\label{proofeq1} \alpha(X,Y,Z) &=& \alpha(X,Z,Y)
\end{eqnarray}

Under the hypothesis we have
\begin{eqnarray}
\nonumber  \alpha(JX,Y,Z) &=& g ((\nabla_{JX}J)Y,Z) \\
\nonumber                 &=& \varepsilon g (J(\nabla_{X}J)Y,Z) \\
\nonumber                 &=& \varepsilon g ((\nabla_{X}J)Y,JZ); \mbox{ since } J \mbox{ is symmetric for } g\\
\label{proofeq2}          &=& \varepsilon \alpha(X,Y,JZ).
\end{eqnarray}
On the other hand,
\begin{eqnarray}
\nonumber  \alpha(JX,Y,Z) &=& \varepsilon g (J(\nabla_{X}J)Y,Z)  \\
\nonumber                 &=& -\varepsilon g ((\nabla_{X}J)JY,Z); \mbox{ since } J \mbox{ anti-commute with } (\nabla_{X}J)\\
\label{proofeq3}          &=& -\varepsilon \alpha(X,JY,Z).
\end{eqnarray}
Combining these three relations, we have
\begin{eqnarray}
  \alpha(JX,Y,Z)          & \stackrel{(\ref{proofeq1})}{=} & \alpha(JX,Z,Y)\\
\nonumber                 & \stackrel{(\ref{proofeq3})}{=} & -\varepsilon\alpha(X,JZ,Y)\\
\nonumber                 & \stackrel{(\ref{proofeq1})}{=} & -\varepsilon\alpha(X,Y,JZ)\\
\nonumber                 & \stackrel{(\ref{proofeq2})}{=} & -\alpha(JX,Y,Z)
\end{eqnarray}
Therefore $\nabla J =0$.
\end{proof}

\begin{definition}[Twin metric]
Let $(M,g, J)$ be an almost anti-Hermitian manifold. The tensor defined by the formula $\widetilde{g}(X,Y) := g(JX,Y),\, \forall X,Y \in \mathfrak{X}(M)$
is symmetric because of equation (\ref{norden}); we have even more, that $(M,\widetilde{g},J)$ is an almost anti-Hermitian manifold. The metric $\widetilde{g}$
is called \textit{associated metric} (\cite{GanchevBorisov}), \textit{twin metric} or \textit{dual metric} (\cite{BorowiecFerrarisFrancavigliaVolovich}).
\end{definition}

\begin{remark}
  Let $(M,g,J)$ be an anti-K\"{a}hler manifold and $\widetilde{g}$ its twin metric. It is proved in \cite[Theorem 5]{IscanSalimov} that the Levi-Civita connection of
  the twin metric coincides with the Levi-Civita connection of $g$. In particular $(M,\widetilde{g},J)$ is also an anti-K\"{a}hler manifold. Using this fact, it is proved in \cite[Theorem 6]{IscanSalimov}
  that the Riemannian curvature tensor of $(M,g,J)$  is \textit{pure}, i.e. for smooth vector fields $X,Y,Z,W$,
  \begin{equation}
  R(JX,Y,Z,W) =     R(X,JY,Z,W) =     R(X,Y,JZ,W) =     R(X,Y,Z,JW).
  \end{equation}
  It had been proved previously in \cite[Equation (16)]{Sluka2}.
\end{remark}

\subsection{Left invariant geometric structures on Lie groups}

We now proceed to consider Lie groups endowed with left invariant geometric structures.
Let $G$ be a Lie group and let us denote by $\mathfrak{g}$ its Lie algebra, which is
the finite dimensional real vector space consisting of all smooth vector fields invariant
under left translations $L_{p}$, $p\in G$. If $g$ is a left invariant
pseudo-Riemannian metric on $G$, i.e.\,the left translations are isometries of $(G,g)$, then
$g$ is completely determined by the inner product $\ip$ on $\mathfrak{g}$ induced by $g$:
\begin{eqnarray*}
  \ipa{X}{Y} &= & g(X,Y),\, \forall X,Y \in \mathfrak{g}.
\end{eqnarray*}

Conversely, every inner product on $T_eG$ (here, $e$ is the identity element of $G$),
or equivalently an inner product on $\mathfrak{g}$,  defines a left invariant pseudo-Riemannian metric on $G$.
The Levi-Civita connection of $(G,g)$ is a left invariant affine connection, that is, if $X, Y \in \mathfrak{g}$
then $\nabla_{X} Y \in \mathfrak{g}$. Besides, since $g(U,V)$ is a constant function on $G$ for all $U,V$ in $\mathfrak{g}$, we have that
$\nabla_{X}$ satisfies
\begin{eqnarray}
\label{skewnabla}                   0 &=& g(\nabla_{X}Y , Z) + g(Y , \nabla_{X}Z),\,
\end{eqnarray}
and, from the Koszul formula,
\begin{eqnarray}
\label{koszul}  g(\nabla_{X}Y , Z) &=&\tfrac{1}{2}\left\{ g([X,Y],Z) - g([Y,Z],X) + g([Z,X],Y)\right\},
\end{eqnarray}
for $X,Y,Z$ in $\mathfrak{g}$.

An almost complex structure $J$ on a Lie group $G$ is said to be \textit{left invariant} if $(\operatorname{d}L_p)\circ J = J \circ(\operatorname{d}L_p)$ for all $p\in G$; equivalently for all $X \in \mathfrak{g}$, $J \circ X \in \mathfrak{g}$. Therefore, $J$ is completely determined by the linear complex structure $J_e : T_eG \rightarrow T_eG$. Conversely, every linear complex transformation on $T_eG$ determines
a left invariant almost complex structure $J$ on $G$, which is integrable if the Nijenhuis tensor $N$ given in (\ref{N}) vanishes
on $\mathfrak{g}$.

\begin{definition}[Abelian complex structure]
A left invariant almost complex structure $J$ on a Lie Group $G$ is called \textit{abelian} when it satisfies
\begin{eqnarray}\label{abeliancomplexdefinition}
  [JX,JY] = [X,Y],\, \forall X,Y \in \mathfrak{g}.
\end{eqnarray}
\end{definition}

\begin{remark}
  Note that an abelian complex structure $J$ on a Lie group $G$ is in fact integrable, hence $(G,J)$ is a complex manifold, but $(G,J)$ is not
  a complex Lie group (unless $\mathfrak{g}$ is a commutative Lie algebra).
\end{remark}

The notion of abelian complex structure has a important role in the complex geometry
of Lie groups. Such notion was introduced by Isabel Dotti, Roberto Miatello and Laura Barberis
in \cite{BarberisDottiMiatello} and since then, it has been extensively studied.

\begin{definition}[Bi-invariant complex structure]
A left invariant almost complex structure $J$ on a Lie group is called \textit{bi-invariant} if it satisfies
\begin{eqnarray}\label{biinvariant}
  [JX,Y] = J[X,Y] (=[X,JY]),\, \forall X,Y \in \mathfrak{g}.
\end{eqnarray}
\end{definition}

\begin{remark}
  Note that a bi-invariant complex structure $J$ on a Lie group $G$ is in fact integrable, and even more,
  $(G,J)$ is a complex Lie group.
\end{remark}

We are interested in studying anti-K\"{a}hler structures in the left invariant setting.
From now on, we say that $(g,J)$ is a \textit{left invariant almost anti-Hermitian structure} on a Lie group $G$
if $(G,g,J)$ is an almost anti-Hermitian manifold where $g$ and $J$ are left invariant geometric structures on $G$. In addition, if $(G,g,J)$ is an anti-K\"{a}hler manifold, we say that $(g,J)$ is a \textit{left invariant anti-K\"{a}hler structure} on $G$.
The following proposition provides sufficient conditions for a left invariant almost anti-Hermitian structure on a Lie group to be a left invariant anti-K\"{a}hler structure.

\begin{proposition}\label{nablaabelianabiinvariante}
Let $(g,J)$ be a left invariant almost anti-Hermitian structure on a Lie group $G$. If any of the following conditions are satisfied:
\begin{eqnarray}
\label{nablaimplicaabeliana}  \nabla_{JX}Y &=& -J\nabla_{X}Y,\,\,\, \forall X,Y \in \mathfrak{g}, \\
\label{nablaimplicabiinvariante}  \nabla_{JX}Y &=& J\nabla_{X}Y,\,\,\, \forall X,Y \in \mathfrak{g},
\end{eqnarray}
then $(G,g,J)$ is an anti-K\"{a}hler manifold, and even more, $J$ is an abelian complex structure if condition (\ref{nablaimplicaabeliana}) holds
and $J$ is a bi-invariant complex structure if condition (\ref{nablaimplicabiinvariante}) is satisfied.
\end{proposition}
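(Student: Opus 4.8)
The plan is to reduce the anti-K\"ahler claim to Proposition \ref{implicaJparalelo} and then read off the bracket identities from the vanishing of $\nabla J$ together with torsion-freeness. Throughout, since $g$, $J$ and $\nabla$ are left invariant it suffices to work with $X,Y \in \mathfrak{g}$, and since $(\nabla_X J)Y$ is tensorial in both arguments, any identity verified on $\mathfrak{g}$ propagates to all vector fields.

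First I would compute $(\nabla_{JX}J)Y = \nabla_{JX}(JY) - J\nabla_{JX}Y$ under each hypothesis, using repeatedly the definition $(\nabla_X J)Y = \nabla_X(JY) - J\nabla_X Y$ together with $J^2 = -\operatorname{Id}$. Under (\ref{nablaimplicaabeliana}), applying the hypothesis once to the pair $(X, JY)$ and once to $(X, Y)$ gives $\nabla_{JX}(JY) = -J\nabla_X(JY)$ and $J\nabla_{JX}Y = -J^2\nabla_X Y = \nabla_X Y$, whence $(\nabla_{JX}J)Y = -J\nabla_X(JY) - \nabla_X Y = -J(\nabla_X J)Y$. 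Under (\ref{nablaimplicabiinvariante}) the same bookkeeping yields $(\nabla_{JX}J)Y = J(\nabla_X J)Y$. In other words, the two hypotheses are exactly the relation of Proposition \ref{implicaJparalelo} with $\varepsilon = -1$ and $\varepsilon = +1$ respectively, so that proposition immediately gives $\nabla J = 0$; that is, $(G,g,J)$ is anti-K\"ahler.

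Once $\nabla J = 0$ is in hand we have $\nabla_X(JY) = J\nabla_X Y$ for all $X,Y$, and I would combine this with the torsion-free identity $[X,Y] = \nabla_X Y - \nabla_Y X$. For the abelian case, write $[JX,JY] = \nabla_{JX}(JY) - \nabla_{JY}(JX)$; hypothesis (\ref{nablaimplicaabeliana}) together with $\nabla J = 0$ gives $\nabla_{JX}(JY) = -J\nabla_X(JY) = -J^2\nabla_X Y = \nabla_X Y$, and symmetrically $\nabla_{JY}(JX) = \nabla_Y X$, so $[JX,JY] = \nabla_X Y - \nabla_Y X = [X,Y]$, i.e.\ $J$ is abelian. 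For the bi-invariant case I would instead expand $[JX,Y] = \nabla_{JX}Y - \nabla_Y(JX)$; hypothesis (\ref{nablaimplicabiinvariante}) gives $\nabla_{JX}Y = J\nabla_X Y$, while $\nabla J = 0$ gives $\nabla_Y(JX) = J\nabla_Y X$, so $[JX,Y] = J(\nabla_X Y - \nabla_Y X) = J[X,Y]$, i.e.\ $J$ is bi-invariant.

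The computations are entirely mechanical; the only real decision is to invoke Proposition \ref{implicaJparalelo} rather than attempting to force $\nabla J = 0$ by hand, and to notice that the defining conditions (abelian versus bi-invariant) emerge naturally from different bracket expansions ($[JX,JY]$ versus $[JX,Y]$). The point most worth double-checking is the sign bookkeeping coming from $J^2 = -\operatorname{Id}$, since it is precisely the sign that separates the two cases and routes $\varepsilon$ to $-1$ or $+1$.
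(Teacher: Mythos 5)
Your proposal is correct and follows essentially the same route as the paper: both reduce the anti-K\"ahler claim to Proposition \ref{implicaJparalelo} by checking $(\nabla_{JX}J)Y = \varepsilon J(\nabla_X J)Y$ with $\varepsilon = \mp 1$, use tensoriality of the relevant expression to pass from $\mathfrak{g}$ to all vector fields, and then obtain the abelian (resp.\ bi-invariant) bracket identity from torsion-freeness combined with $\nabla J \equiv 0$. Your sign bookkeeping with $J^2 = -\operatorname{Id}$ checks out in all four computations.
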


\begin{proof}
  We begin by noting that any of the conditions implies that
  $$
  (\nabla_{JX}J)Y = \varepsilon J(\nabla_{X}J) Y,\,\,\, \forall X,Y \in \mathfrak{g};
  $$
with $\varepsilon =-1$ when the condition (\ref{nablaimplicaabeliana}) is satisfied, and $\varepsilon = 1$ in the other case.

Since $B(X,Y) = (\nabla_{JX}J) Y - \varepsilon J(\nabla_{X} J)Y$ is a $(2,1)$-tensor field on $G$
and $B$  vanishes for all $X, Y$ in $\mathfrak{g}$, we have that $B$  vanishes identically. From Proposition \ref{implicaJparalelo},
it follows that $(G,g,J)$ is an anti-K\"{a}hler manifold.

We now proceed with the proof of the last part of the proposition. If condition (\ref{nablaimplicaabeliana}) is satisfied,
let $X,Y$ in $\mathfrak{g}$:
\begin{eqnarray*}
  [JX,JY] &=& \nabla_{JX} JY - \nabla_{JY} JX\\
          &\stackrel{(\ref{nablaimplicaabeliana})}{=}&  -J \nabla_{X} J Y + J \nabla_{Y} J X\\
          &\stackrel{(\nabla J)\equiv 0 }{=}& \nabla_{X} Y - \nabla_{Y}X\\
          & = & [X,Y].
\end{eqnarray*}
By a similar argument, it is easy to check that condition (\ref{nablaimplicabiinvariante})
implies that $J$ is a bi-invariant complex structure.
\end{proof}

In the following two propositions we state and prove the converse of Proposition \ref{nablaabelianabiinvariante}.

\begin{proposition}\label{Jabelianasaleconnection}
  Let $(g,J)$ be a left invariant anti-K\"{a}hler structure on a Lie group $G$ such that $J$ is an abelian complex structure. Then
  $(G,g,J)$ satisfies the condition (\ref{nablaimplicaabeliana}), i.e.
  \begin{eqnarray*}
    \nabla_{JX}Y = -J\nabla_{X}Y,\, \,\,\forall X,Y \in \mathfrak{g}.
  \end{eqnarray*}
\end{proposition}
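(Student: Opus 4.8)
The plan is to prove the equivalent statement that the $\mathfrak{g}$-valued bilinear map
\[
S(X,Y) := \nabla_{JX}Y + J\nabla_{X}Y
\]
vanishes identically on $\mathfrak{g}$, and to accomplish this by showing that the associated trilinear form $\sigma(X,Y,Z) := g(S(X,Y),Z)$ is at once \emph{symmetric} in its first two arguments and \emph{skew-symmetric} in its last two; a form with both properties is necessarily zero. The tools I intend to use repeatedly are: the symmetry of $J$ for $g$ (so $g(JU,V)=g(U,JV)$, from (\ref{norden})); metric compatibility of $\nabla$ in the form (\ref{skewnabla}); torsion-freeness of the Levi-Civita connection, $\nabla_UV-\nabla_VU=[U,V]$; the anti-K\"ahler hypothesis in the form $\nabla_U(JV)=J\nabla_UV$; and the abelian identity written as $[JX,Y]+[X,JY]=0$, which follows from (\ref{abeliancomplexdefinition}) by replacing $Y$ with $JY$ and using $J^2=-\mathrm{Id}$.

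For the symmetry of $S$ in $X,Y$, I would use torsion-freeness together with $\nabla J=0$ to rewrite the two brackets appearing in the abelian identity as $[JX,Y]=\nabla_{JX}Y-J\nabla_{Y}X$ and $[X,JY]=J\nabla_{X}Y-\nabla_{JY}X$. Adding them, the right-hand side collapses to $S(X,Y)-S(Y,X)$, while the left-hand side is $[JX,Y]+[X,JY]=0$; hence $S(X,Y)=S(Y,X)$, i.e.\ $\sigma(X,Y,Z)=\sigma(Y,X,Z)$. For the skew-symmetry of $\sigma$ in $Y,Z$, I would expand $\sigma(X,Y,Z)=g(\nabla_{JX}Y,Z)+g(\nabla_{X}Y,JZ)$ (the second term using that $J$ is symmetric), then move each covariant derivative off the factor $Y$ by metric compatibility (\ref{skewnabla}); the anti-K\"ahler identity $\nabla_{X}(JZ)=J\nabla_{X}Z$ then lets me reassemble the outcome as $-g(Y,\nabla_{JX}Z+J\nabla_{X}Z)=-g(Y,S(X,Z))$, that is $\sigma(X,Y,Z)=-\sigma(X,Z,Y)$.

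With both symmetries in hand the conclusion is the standard one: chaining the transposition of positions $(1,2)$ and the transposition of positions $(2,3)$ around $S_3$ gives
$\sigma(X,Y,Z)=\sigma(Y,X,Z)=-\sigma(Y,Z,X)=-\sigma(Z,Y,X)=\sigma(Z,X,Y)=\sigma(X,Z,Y)=-\sigma(X,Y,Z)$,
so $\sigma\equiv 0$, whence $S\equiv 0$ and $\nabla_{JX}Y=-J\nabla_{X}Y$, as claimed. The main obstacle I anticipate is purely bookkeeping rather than conceptual: one must apply torsion-freeness, $\nabla J=0$, and the abelian identity with the correct placement of $J$ to land the exact expression $S(X,Y)-S(Y,X)$ in the first computation, and apply metricity to the right factor in the second so that the anti-K\"ahler identity reconstitutes $S(X,Z)$ rather than a mismatched combination. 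I would also double-check at the outset that the abelian condition is genuinely used (it produces the symmetry in $(1,2)$), since only the combination of that symmetry with the metric/anti-K\"ahler skew-symmetry forces $\sigma$ to vanish.
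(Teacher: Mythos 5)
Your proposal is correct and is essentially the paper's own proof: the paper considers the same trilinear form $\alpha(X,Y,Z)=g(\nabla_{JX}Y+J\nabla_{X}Y,Z)$, establishes exactly your two properties (skew-symmetry in the last two slots from metric compatibility, $\nabla J=0$ and the $g$-symmetry of $J$; symmetry in the first two slots from the abelian condition via torsion-freeness), and concludes $\alpha\equiv 0$ by the same observation that the transpositions involved generate $S_3$. Your verification of each step, including the rewriting of the abelian identity as $[JX,Y]+[X,JY]=0$, is sound and no gap remains.
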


\begin{proof}
Let us consider the real inner product $\ip$ on $\mathfrak{g}$ induced by the metric $g$.
Define $\alpha(X,Y,Z) = \ipa{\nabla_{JX}Y + J\nabla_{X}Y}{Z}$, $\forall X,Y,Z$ in $\mathfrak{g}$.

From property (\ref{skewnabla}) and the fact that $J$ is parallel and symmetric for $\ip$, it follows that $\alpha$ is skew-symmetric in the last two variables
\begin{eqnarray}\label{proofabe1}
  \alpha(X,Y,Z)&=& -\alpha(X,Z,Y).
\end{eqnarray}
Since $J$ is an abelian complex structure, it is immediate to see that
\begin{eqnarray*}
\nabla_{JX}Y+J\nabla_{X}Y & = & \nabla_{JY}X + J\nabla_{Y}X
\end{eqnarray*}
and therefore $\alpha$ is symmetric in the first two variables:
\begin{eqnarray}\label{proofabe2}
  \alpha(X,Y,Z)&=& \alpha(Y,X,Z).
\end{eqnarray}
Combining (\ref{proofabe1}) with (\ref{proofabe2}) yields that $\alpha$ is symmetric in the first and last variables:
\begin{eqnarray}
\nonumber               \alpha(X,Y,Z) & \stackrel{(\ref{proofabe1})}{=} & -\alpha(X,Z,Y) \\
\nonumber                             & \stackrel{(\ref{proofabe2})}{=} & -\alpha(Z,X,Y) \\
\label{proofabe3}                     & \stackrel{(\ref{proofabe1})}{=} & -(-\alpha(Z,Y,X)).
\end{eqnarray}
Finally, the symmetry given by (\ref{proofabe3}) and (\ref{proofabe2}) implies that $\alpha$ is
a symmetric tensor on $\mathfrak{g}$, since the group $S_3$ is generated by the transpositions $(1\,2)$ and $(1\, 3)$.
In particular, $\alpha$ is symmetric in the last two variables:
\begin{eqnarray}
\nonumber               \alpha(X,Y,Z) & \stackrel{(\ref{proofabe1})}{=} & \alpha(Y,X,Z) \\
\nonumber                             & \stackrel{(\ref{proofabe3})}{=} & \alpha(Z,X,Y) \\
\label{proofabe4}                     & \stackrel{(\ref{proofabe1})}{=} & \alpha(X,Z,Y).
\end{eqnarray}
Comparing (\ref{proofabe4}) and (\ref{proofabe1}) we obtain that $\alpha$ vanishes identically, which establishes the proof.
\end{proof}

\begin{proposition}
  Let $(g,J)$ be a left invariant anti-K\"{a}hler structure on a Lie group $G$ such that $J$ is a bi-invariant complex structure.
  Then $(G,g,J)$ satisfies condition (\ref{nablaimplicabiinvariante}), i.e.
  \begin{eqnarray*}
    \nabla_{JX}Y = J\nabla_{X}Y,\,\,\, \forall X,Y \in \mathfrak{g}.
  \end{eqnarray*}
\end{proposition}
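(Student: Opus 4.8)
The plan is to mirror, almost verbatim, the proof of Proposition~\ref{Jabelianasaleconnection}, replacing the abelian hypothesis by the bi-invariant one at the single place where the structure of $J$ is used. Working with the inner product $\ip$ on $\mathfrak{g}$ induced by $g$, I would introduce the covariant $3$-tensor
\[
\alpha(X,Y,Z) = \ipa{\nabla_{JX}Y - J\nabla_{X}Y}{Z}, \qquad \forall\, X,Y,Z \in \mathfrak{g},
\]
where the minus sign is chosen to match the target identity (\ref{nablaimplicabiinvariante}). The goal is to show $\alpha \equiv 0$ by establishing two symmetries that together force it to vanish.

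First I would check skew-symmetry in the last two variables, $\alpha(X,Y,Z) = -\alpha(X,Z,Y)$. This step is identical to the abelian case: using that $\nabla_{JX}$ is skew for $\ip$ (equation (\ref{skewnabla})), together with the facts that $J$ is parallel ($\nabla_{X}JZ = J\nabla_{X}Z$) and symmetric for $\ip$, one rewrites $\ipa{J\nabla_{X}Y}{Z} = \ipa{\nabla_{X}Y}{JZ} = -\ipa{Y}{J\nabla_{X}Z}$ and $\ipa{\nabla_{JX}Y}{Z} = -\ipa{Y}{\nabla_{JX}Z}$, from which the asserted skew-symmetry follows.

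The crux is the symmetry in the first two variables, $\alpha(X,Y,Z) = \alpha(Y,X,Z)$, which is the only place the bi-invariance enters. Here I would prove the stronger vector identity $\nabla_{JX}Y - J\nabla_{X}Y = \nabla_{JY}X - J\nabla_{Y}X$, equivalently $\nabla_{JX}Y - \nabla_{JY}X = J[X,Y]$. Using the torsion-free condition $\nabla_{A}B - \nabla_{B}A = [A,B]$ twice, one gets $\nabla_{JX}Y - \nabla_{JY}X = \nabla_{Y}JX - \nabla_{X}JY + [JX,Y] + [X,JY]$; bi-invariance (\ref{biinvariant}) collapses $[JX,Y] + [X,JY]$ to $2J[X,Y]$, while $\nabla J = 0$ gives $\nabla_{Y}JX - \nabla_{X}JY = -J[X,Y]$, and the two contributions add to $J[X,Y]$. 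I expect this torsion-free computation to be the main (indeed the only nontrivial) obstacle, since it is where the specific bi-invariant structure and the parallelism of $J$ genuinely interact.

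Finally, with $\alpha$ skew in its last two slots and symmetric in its first two, the same $S_3$ argument used in Proposition~\ref{Jabelianasaleconnection} applies unchanged: the transpositions act by $-1$ and $+1$ respectively, and chaining them yields $\alpha(X,Y,Z) = \alpha(X,Z,Y)$, which together with skew-symmetry forces $\alpha \equiv 0$, hence $\nabla_{JX}Y = J\nabla_{X}Y$. As a sanity check I would also note that a direct expansion via the Koszul formula (\ref{koszul}) proves the identity term-by-term: each of the three summands of $\ipa{\nabla_{JX}Y}{Z}$ matches the corresponding summand of $\ipa{\nabla_{X}Y}{JZ}$ once one applies $[JX,Y] = [X,JY] = J[X,Y]$ and the symmetry of $J$. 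This alternative route shows, in passing, that the conclusion requires only that $J$ be bi-invariant and symmetric for $g$, not the full anti-K\"ahler hypothesis, in agreement with the claim announced for Section~3.
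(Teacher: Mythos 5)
Your proposal is correct, but it is considerably more roundabout than the paper's argument, and in fact contains it as a subroutine. The paper proves this proposition in two lines: torsion-freeness plus bi-invariance give $\nabla_{JX}Y - \nabla_{Y}JX = [JX,Y] = J[X,Y] = J(\nabla_{X}Y - \nabla_{Y}X)$, and $\nabla J \equiv 0$ gives $\nabla_{Y}JX = J\nabla_{Y}X$, whence $\nabla_{JX}Y = J\nabla_{X}Y$ directly. Notice that your \comillas{crux} step --- the symmetry $\alpha(X,Y,Z)=\alpha(Y,X,Z)$, proved via torsion-freeness applied twice, bi-invariance, and $\nabla J = 0$ --- already uses exactly these ingredients; indeed, from your own identities $\nabla_{JX}Y = \nabla_{Y}JX + [JX,Y]$ and $\nabla_{Y}JX = J\nabla_{Y}X$ one concludes $\nabla_{JX}Y = J(\nabla_{Y}X + [X,Y]) = J\nabla_{X}Y$ on the spot, making the $3$-tensor $\alpha$, the skew-symmetry check, and the $S_3$ argument superfluous. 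This is the structural asymmetry between the two cases that your \comillas{mirror the abelian proof} plan misses: in Proposition \ref{Jabelianasaleconnection} the symmetrization machinery is genuinely needed because the abelian condition $[JX,JY]=[X,Y]$ does not interact with torsion-freeness to produce the conclusion pointwise, whereas the bi-invariant condition $[JX,Y]=J[X,Y]$ does. That said, every step you wrote checks out (the sign choice in $\alpha$, both symmetries, and the vanishing argument), and your closing Koszul-formula remark is a valid independent route; it correctly observes that only bi-invariance and the symmetry of $J$ for $g$ are needed, which is consistent with Proposition \ref{JbiinvariantAntiKahler}, by which the anti-K\"ahler hypothesis is automatic in this setting anyway.
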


\begin{proof}
  The proof is straightforward and follows directly from the fact that the Levi-Civita is a torsion-free connection:
Since $[JX,Y]=J[X,Y]$, $\forall X,Y \in \mathfrak{g}$, we have $\nabla_{JX}Y - \nabla_{Y}JX = J(\nabla_{X}Y - \nabla_{Y}X)$.
Since $\nabla J \equiv 0$, we have $\nabla_{Y}JX = J\nabla_{Y}X$, and so $\nabla_{JX}Y = J\nabla_{X}Y$.
\end{proof}

\section{Anti-K\"{a}hler geometry on complex Lie groups}

It has already been proved in \cite{BorowiecFrancavigliaVolovich} that
\begin{proposition}\cite[Proposition 4.1]{BorowiecFrancavigliaVolovich}
  Every complex parallelizable manifold $M$ admits an anti-K\"{a}hler structure.
\end{proposition}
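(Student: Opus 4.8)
The plan is to manufacture the metric from a holomorphic--Riemannian one and to read off the parallelism of $J$ from the type decomposition of the complexified tangent bundle.

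First I would unwind the hypothesis. Saying that $M$ is complex parallelizable means that the holomorphic tangent bundle $T^{1,0}M$ is holomorphically trivial, so there is a global frame $Z_1,\dots,Z_n$ of $T^{1,0}M$ consisting of holomorphic vector fields. Two structural bracket relations are then available and will carry the whole argument. Since the bracket of holomorphic vector fields is holomorphic, $[Z_i,Z_j]=\sum_k f_{ij}^k Z_k$ with $f_{ij}^k$ holomorphic, so $[Z_i,Z_j]\in T^{1,0}M$ (this is just integrability of $J$); and, working in local holomorphic coordinates, $[Z_i,\bar Z_j]=0$, because holomorphic coefficients are annihilated by $\partial_{\bar z}$ and antiholomorphic ones by $\partial_{z}$. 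Next I define a metric $g$ by prescribing its complex-bilinear extension $g^{\mathbb{C}}$ to $TM\otimes\mathbb{C}$ on the frame:
\[
g^{\mathbb{C}}(Z_i,Z_j)=\delta_{ij},\qquad g^{\mathbb{C}}(\bar Z_i,\bar Z_j)=\delta_{ij},\qquad g^{\mathbb{C}}(Z_i,\bar Z_j)=0.
\]
These values are conjugation invariant, so $g^{\mathbb{C}}$ is the extension of a genuine real symmetric tensor $g$, which is smooth and nondegenerate of signature $(n,n)$; declaring the $Z_i$ orthonormal amounts to taking $g$ to be (a real multiple of) the real part of the holomorphic--Riemannian metric $h=\sum_i \theta^i\odot\theta^i$ dual to the frame. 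Since $J$ acts as multiplication by $i$ on $T^{1,0}M$ and by $-i$ on $T^{0,1}M$, decomposing $X,Y$ into types and using that the mixed pairing vanishes gives $g(JX,JY)=-g(X,Y)$, so $(M,g,J)$ is an almost anti-Hermitian manifold, in fact an anti-Hermitian one since $J$ is integrable.

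The heart of the proof is to show $\nabla J=0$. I would extend the Levi--Civita connection $\nabla$ complex-bilinearly and verify that it preserves the type decomposition $TM\otimes\mathbb{C}=T^{1,0}M\oplus T^{0,1}M$; because $J$ is exactly $\pm i$ on these two summands, preserving them is equivalent to $\nabla J=0$. To see, for instance, that $\nabla_{Z_i}Z_j\in T^{1,0}M$, I pair $\nabla_{Z_i}Z_j$ against $\bar Z_k$ (the pairing that detects the $T^{0,1}M$-component) and apply the Koszul formula. Every term vanishes: the three derivative terms die because $g^{\mathbb{C}}(Z_a,\bar Z_b)\equiv 0$ and $g^{\mathbb{C}}(Z_a,Z_b)\equiv\delta_{ab}$ are constants; the term $g^{\mathbb{C}}([Z_i,Z_j],\bar Z_k)$ vanishes because $[Z_i,Z_j]\in T^{1,0}M$ and the mixed pairing is zero; and the remaining two bracket terms vanish outright because $[Z_j,\bar Z_k]=[\bar Z_k,Z_i]=0$. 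Nondegeneracy of $g^{\mathbb{C}}$ on $T^{0,1}M$ then forces the $T^{0,1}M$-component of $\nabla_{Z_i}Z_j$ to be zero. The same bookkeeping (and its complex conjugate) shows $\nabla_{Z_i}\bar Z_j\in T^{0,1}M$, $\nabla_{\bar Z_i}Z_j\in T^{1,0}M$ and $\nabla_{\bar Z_i}\bar Z_j\in T^{0,1}M$, i.e.\ $\nabla$ preserves both summands. Hence $\nabla J=0$ and $(M,g,J)$ is anti-K\"ahler.

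I expect the only real obstacle to be isolating and justifying the two structural identities that make the Koszul terms collapse, namely $[Z_i,\bar Z_j]=0$ for a holomorphic/antiholomorphic frame and the vanishing of the mixed metric pairing $g^{\mathbb{C}}(Z_i,\bar Z_j)=0$; once these are in hand the computation is purely formal. A secondary point to be careful about is that $g$ is globally well defined and nondegenerate, which is immediate here since the frame is global and $g$ has constant coefficients in it. An alternative, more conceptual route would be to construct the holomorphic Levi--Civita connection of $h$ on $T^{1,0}M$, observe that as a real connection it is complex-linear, torsion-free and metric for $g$, and invoke uniqueness of the Levi--Civita connection to identify it with $\nabla$; complex-linearity then yields $\nabla J=0$ directly.
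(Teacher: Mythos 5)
Your proof is correct, and all the key steps check out: the bracket identities $[Z_i,Z_j]\in T^{1,0}M$ and $[Z_i,\bar Z_j]=0$ for a global holomorphic frame are exactly right, the prescription of $g^{\mathbb{C}}$ on the frame is conjugation-invariant and so descends to a genuine real metric of signature $(n,n)$, the vanishing of the mixed pairing is precisely what the anti-Hermitian condition (\ref{norden}) forces on the type decomposition, and the Koszul computation does kill every term, so $\nabla$ preserves both eigenbundles of $J$ and $\nabla J=0$ follows. One thing worth noting for the comparison: the paper itself gives \emph{no} proof of this statement --- it is quoted directly from \cite[Proposition 4.1]{BorowiecFrancavigliaVolovich} --- so there is no internal argument to measure yours against. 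Your construction is, in substance, the original one of Borowiec--Francaviglia--Volovich: they establish a one-to-one correspondence between holomorphic-Riemannian metrics and anti-K\"ahler metrics (their Theorem 2.2, which the present paper recalls in its introduction) and then feed in the canonical holomorphic metric $h=\sum_i\theta^i\odot\theta^i$ attached to the parallelization; your ``alternative, more conceptual route'' at the end is exactly their mechanism, while your main text unwinds it into a direct Koszul verification. That direct verification is also the general-manifold analogue of what this paper does prove, namely Proposition \ref{JbiinvariantAntiKahler}: there the authors run the same kind of Koszul bookkeeping, with the bi-invariance identity (\ref{biinvariant}) playing the role that your structural identities $[Z_i,\bar Z_j]=0$ and $g^{\mathbb{C}}(Z_i,\bar Z_j)=0$ play here (a complex Lie group with left invariant $g$ being the homogeneous instance of a complex parallelizable manifold). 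What your version buys is self-containedness and an explicit metric; what the correspondence approach buys is that it transports the whole Einstein condition as well, which is what the paper needs later for the semisimple case.
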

This proposition is a very contrasting result with the K\"ahler case, where the only compact complex parallelizable manifold
admitting K\"{a}hler structures are \textit{complex tori}; a well known result due to Hsien-Chung Wang (\cite[Corollary 2]{Wang}).

Other proof of the existence of an anti-K\"{a}hler structure on complex Lie group is given
by Marta Teofilova in \cite[Proposition 3.1]{Teofilova5}. The following proposition may be
considered as a generalization of the above mentioned result.

\begin{proposition}\label{JbiinvariantAntiKahler}
Let $(g,J)$ be a left invariant almost anti-Hermitian structure on a Lie group $G$
where $J$ is a bi-invariant complex structure on $G$. Then $(G,g,J)$ is
an anti-K\"{a}hler manifold.
\end{proposition}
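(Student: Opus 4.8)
The plan is to reduce the statement to Proposition \ref{nablaabelianabiinvariante} by verifying its condition (\ref{nablaimplicabiinvariante}), namely $\nabla_{JX}Y = J\nabla_X Y$ for all $X,Y \in \mathfrak{g}$. Once this single connection identity is established for left invariant fields, Proposition \ref{nablaabelianabiinvariante} immediately yields that $(G,g,J)$ is anti-K\"ahler. So the whole burden of the proof lies in deriving this identity from the two structural inputs at hand: the bi-invariance (\ref{biinvariant}), $[JX,Y]=J[X,Y]=[X,JY]$, and the symmetry of $J$ with respect to $g$ encoded in (\ref{norden}), equivalently $g(JA,B)=g(A,JB)$.

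First I would fix $X,Y,Z \in \mathfrak{g}$ and expand $2g(\nabla_{JX}Y,Z)$ by the Koszul formula (\ref{koszul}) into $g([JX,Y],Z) - g([Y,Z],JX) + g([Z,JX],Y)$. In the first and third terms I would extract $J$ from the bracket via bi-invariance ($[JX,Y]=J[X,Y]$ and $[Z,JX]=J[Z,X]$), and then use the symmetry of $J$ to transport the resulting factor onto the other argument; in the middle term symmetry alone rewrites $g([Y,Z],JX)$ as $g(J[Y,Z],X)$. This produces a three-term expression in which each summand carries exactly one factor of $J$.

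Next I would carry out the parallel computation for $2g(J\nabla_X Y, Z)$. Since $J$ is symmetric, this equals $2g(\nabla_X Y, JZ)$, which Koszul expands into $g([X,Y],JZ) - g([Y,JZ],X) + g([JZ,X],Y)$; here the last two brackets simplify by bi-invariance to $J[Y,Z]$ and $J[Z,X]$. Comparing the two three-term expressions, I expect them to agree summand by summand, whence $g(\nabla_{JX}Y,Z)=g(J\nabla_X Y,Z)$ holds for every $Z \in \mathfrak{g}$, giving (\ref{nablaimplicabiinvariante}) and hence the conclusion.

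The computation carries no genuine obstacle; the anti-isometry hypothesis enters only through the symmetry $g(JA,B)=g(A,JB)$, and the one point demanding care is the bookkeeping that tracks which bracket each $J$ is extracted from and onto which argument of $g$ it is finally transported, so that the three pairs of terms match exactly. Notably, no curvature computation and no use of the Levi-Civita connection beyond the Koszul formula is needed, and the argument invokes integrability of $J$ only through what bi-invariance already supplies.
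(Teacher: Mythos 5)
Your proposal is correct, and the computation checks out in detail: the Koszul expansions of $2g(\nabla_{JX}Y,Z)$ and of $2g(\nabla_X Y,JZ)$ do match summand by summand once bi-invariance and the $g$-symmetry of $J$ are applied as you describe, so condition (\ref{nablaimplicabiinvariante}) holds on $\mathfrak{g}$ and Proposition \ref{nablaabelianabiinvariante} finishes the argument. The paper, however, takes a more direct route with the same ingredients: it runs essentially the same Koszul computation but with $J$ placed in the \emph{differentiated} slot rather than in the differentiating direction, showing $2g(\nabla_X JY,Z)=2g(J\nabla_X Y,Z)$ and hence $(\nabla_X J)Y=0$ outright for left invariant $X,Y$; tensoriality of $\nabla J$ then gives $\nabla J\equiv 0$ immediately. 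The practical difference lies in what each version leans on. Your reduction routes through Proposition \ref{nablaabelianabiinvariante}, whose proof rests on the $\varepsilon$-criterion of Proposition \ref{implicaJparalelo} --- a genuinely nontrivial symmetrization argument --- whereas the paper's computation is self-contained and needs neither. In exchange, your argument establishes the connection identity $\nabla_{JX}Y=J\nabla_X Y$ explicitly along the way, a fact the paper only obtains afterwards as the separate converse proposition following Proposition \ref{nablaabelianabiinvariante}, and there it is derived from torsion-freeness together with $\nabla J\equiv 0$ rather than from Koszul. Both routes use exactly the same two structural inputs (bi-invariance and the symmetry of $J$ for $g$), so the choice is one of economy: the paper's proof is shorter and logically lighter; yours packages the condition (\ref{nablaimplicabiinvariante}) as reusable output.
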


\begin{proof}
Since $(\nabla J)(\cdot , \cdot)$ is a tensor field
on $G$, it is sufficient to verify that $\nabla_{X} J Y = J \nabla_{X} Y$ for all $X,Y$ in $\mathfrak{g}$.
For an arbitrary $Z \in \mathfrak{g}$ we have
\begin{eqnarray*}
2g(\nabla_{X}JY,Z) & \stackrel{(\ref{koszul})}{=}                 & g([X,JY],Z) - g([JY,Z],X) + g([Z,X],JY)\\
                   & \stackrel{(\ref{biinvariant})}{=}            & g(J[X,Y],Z) - g([Y,JZ],X) + g([Z,X],JY) \\
                   & \stackrel{(\ref{norden})}{=}  & g([X,Y],JZ) - g([Y,JZ],X) + g(J[Z,X],Y)\\
                   & \stackrel{(\ref{biinvariant})}{=}            & g([X,Y],JZ) - g([Y,JZ],X) + g([JZ,X],Y)\\
                   & \stackrel{(\ref{koszul})}{=}                 & 2g(\nabla_X Y,JZ)\\
                   & \stackrel{(\ref{norden})}{=}  & 2g(J\nabla_X Y,Z),
\end{eqnarray*}
and the proposition follows.
\end{proof}

\begin{remark}
Combining  Remark \ref{baseOrtonormal} and Proposition \ref{JbiinvariantAntiKahler} with well-known results
of representation of algebras and \textit{wild problems} (also known as \textit{hopeless problems}), we have that the classification of anti-K\"ahler
manifolds could be a wild problem.
\end{remark}

To finish this section, we want to study a result due to Marta Teofilova concerning almost anti-Hermitian structures on complex semisimple Lie groups
(see \cite[Proposition 3.3]{Teofilova5}). We improve slightly this result, by using the previous  proposition and well-known results
on the Killing form of semisimple Lie algebras, namely that the Killing form of a semisimple Lie algebra $\mathfrak{g}$ is an inner product
(Cartan's criterion) inducing a bi-invariant Einstein metric on a Lie group $G$ with Lie algebra $\mathfrak{g}$. We denote by $R$ and $\operatorname{Rc}$ the Riemannian curvature tensor and the Ricci tensor of a pseudo-Riemannianan manifold, respectively.

\begin{proposition}
 Let $G$ be a semisimple Lie group admitting a bi-invariant complex structure $J$.
 If $g$ is the bi-invariant metric on $G$ induced by the Killing form of $\mathfrak{g}$
 then $(G,g,J)$ is an anti-K\"{a}hler-Einstein manifold with non-vanishing cosmological constant.
\end{proposition}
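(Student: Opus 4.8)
The plan is to proceed in three stages: first to confirm that the pair $(g,J)$ is genuinely a left invariant almost anti-Hermitian structure, then to obtain the anti-K\"ahler property at no cost from Proposition~\ref{JbiinvariantAntiKahler}, and finally to compute the Ricci tensor explicitly in order to read off both the Einstein condition and the value of the cosmological constant.

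For the first stage I would write $B$ for the Killing form, so that $g(X,Y)=B(X,Y)=\operatorname{tr}(\operatorname{ad}(X)\operatorname{ad}(Y))$ for $X,Y\in\mathfrak{g}$. The key observation is that the bi-invariance condition (\ref{biinvariant}) says exactly that $\operatorname{ad}(JX)=J\circ\operatorname{ad}(X)=\operatorname{ad}(X)\circ J$; in particular $J$ commutes with every $\operatorname{ad}(X)$. Using $J^{2}=-\operatorname{Id}$ together with the cyclic invariance of the trace, I then compute
\begin{equation*}
B(JX,JY)=\operatorname{tr}\bigl(J\operatorname{ad}(X)\,J\operatorname{ad}(Y)\bigr)=\operatorname{tr}\bigl(J^{2}\operatorname{ad}(X)\operatorname{ad}(Y)\bigr)=-B(X,Y),
\end{equation*}
so that (\ref{norden}) holds and $(g,J)$ is indeed almost anti-Hermitian. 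Since $g$ is non-degenerate by Cartan's criterion (semisimplicity) and $J$ is bi-invariant, Proposition~\ref{JbiinvariantAntiKahler} then immediately yields that $(G,g,J)$ is an anti-K\"ahler manifold.

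It remains to show that $g$ is Einstein with a non-zero Einstein constant. Here I would invoke the standard identities for a bi-invariant metric: for left invariant $X,Y,Z$ one has $\nabla_{X}Y=\tfrac12[X,Y]$ and hence, after a Jacobi-identity simplification, $R(X,Y)Z=-\tfrac14[[X,Y],Z]$. Taking the trace of the map $W\mapsto R(W,Y)Z$ and using that $\operatorname{tr}(\operatorname{ad}([Y,Z]))=\operatorname{tr}[\operatorname{ad}(Y),\operatorname{ad}(Z)]=0$, one is left with
\begin{equation*}
\operatorname{Rc}(Y,Z)=-\tfrac14\operatorname{tr}(\operatorname{ad}(Y)\operatorname{ad}(Z))=-\tfrac14 B(Y,Z)=-\tfrac14\,g(Y,Z).
\end{equation*}
Thus $\operatorname{Rc}=-\tfrac14\,g$, exhibiting $(G,g)$ as Einstein with cosmological constant $-\tfrac14$, which is non-vanishing. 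Combined with the anti-K\"ahler property obtained above, this establishes the claim.

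The argument is essentially a bundle of standard computations, so there is no deep obstacle. The one step that is not purely mechanical is the compatibility check $B(JX,JY)=-B(X,Y)$, and it is exactly here that the hypothesis that $J$ be \emph{bi-invariant}, rather than merely integrable, is used in an essential way: it is the commutation of $J$ with all $\operatorname{ad}(X)$ that converts $J^{2}=-\operatorname{Id}$ into the Norden identity and thereby allows the whole machinery to be started.
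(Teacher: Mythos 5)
Your proof is correct and follows essentially the same route as the paper, which states this proposition without a detailed proof and relies precisely on Proposition \ref{JbiinvariantAntiKahler} together with the standard Killing-form facts that you verify explicitly: non-degeneracy via Cartan's criterion, the Norden compatibility $B(JX,JY)=-B(X,Y)$ obtained from $\operatorname{ad}(JX)=J\circ\operatorname{ad}(X)=\operatorname{ad}(X)\circ J$, and the Einstein property of the bi-invariant metric from $R(X,Y)Z=-\tfrac14[[X,Y],Z]$. The only discrepancy is a harmless trace-convention sign in the Ricci tensor --- you get $\operatorname{Rc}=-\tfrac14\,g$ whereas the paper (in the proof of the converse proposition) writes $\operatorname{Rc}(X,Y)=\tfrac14 B(X,Y)$ --- and in either convention the cosmological constant is $\pm\tfrac14\neq 0$, so the conclusion stands.
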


About the converse of above proposition, we can prove:

\begin{proposition}
  Let $G$ be a Lie group admitting a left invariant anti-K\"{a}hler-Einstein structure $(g,J)$
  with non-vanishing cosmological constant and $g$ a bi-invariant metric. Then, $G$ is
  a semisimple Lie group and $J$ is a bi-invariant complex structure on  $G$.
\end{proposition}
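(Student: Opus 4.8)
The plan is to exploit the rigidity that a bi-invariant metric forces on both the Levi-Civita connection and the Ricci tensor. First I would record the two standard facts for a bi-invariant (equivalently, $\operatorname{ad}$-invariant) metric $g$ on $G$: the Levi-Civita connection is $\nabla_X Y = \tfrac{1}{2}[X,Y]$ for $X,Y \in \mathfrak{g}$, and the Ricci tensor is $\operatorname{Rc}(X,Y) = -\tfrac{1}{4}B(X,Y)$, where $B$ denotes the Killing form of $\mathfrak{g}$. Both follow from the skew-symmetry of $\operatorname{ad}_X$ with respect to $g$: the connection formula drops out of the Koszul formula (\ref{koszul}) after using the identities $g([Y,Z],X) = g([X,Y],Z) = g([Z,X],Y)$, and the Ricci formula follows from $R(X,Y)Z = -\tfrac{1}{4}[[X,Y],Z]$ together with the definition of the trace.

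For the semisimplicity claim I would use the Einstein hypothesis. A non-vanishing cosmological constant means $\operatorname{Rc} = \lambda g$ for some real $\lambda \neq 0$. Combining this with $\operatorname{Rc} = -\tfrac{1}{4}B$ gives $B = -4\lambda\, g$. Since $g$ is a pseudo-Riemannian metric it is nondegenerate, and as $\lambda \neq 0$ the Killing form $B$ is therefore nondegenerate as well. By Cartan's criterion this is equivalent to $\mathfrak{g}$ being semisimple, so $G$ is a semisimple Lie group.

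For the bi-invariance of $J$ I would use that $(g,J)$ is anti-K\"ahler, i.e.\ $\nabla J \equiv 0$, so $\nabla_X(JY) = J\nabla_X Y$ for all $X,Y \in \mathfrak{g}$. Feeding in $\nabla_X Y = \tfrac{1}{2}[X,Y]$ converts this into $[X,JY] = J[X,Y]$. Finally, skew-symmetry of the bracket upgrades this to the full condition (\ref{biinvariant}), since $[JX,Y] = -[Y,JX] = -J[Y,X] = J[X,Y]$; hence $J$ is a bi-invariant complex structure.

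I do not anticipate a serious obstacle: the argument is essentially a bookkeeping assembly of classical facts. The only points requiring care are (i) pinning down the sign and normalisation in $\operatorname{Rc} = -\tfrac{1}{4}B$, so that the identification $B = -4\lambda\, g$ is correct, and (ii) checking that the reduction of $\nabla J \equiv 0$ to condition (\ref{biinvariant}) genuinely uses nothing beyond the bi-invariant connection formula and the antisymmetry of the bracket, with no hidden integrability or symmetry hypothesis on $J$. Both are routine to verify.
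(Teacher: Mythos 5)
Your proof is correct, and its second half takes a genuinely different and more direct route than the paper. The semisimplicity argument is essentially the paper's: both of you identify $\operatorname{Rc}$ with a multiple of the Killing form $B$ for a bi-invariant metric and invoke Cartan's criterion; the sign discrepancy (the paper writes $\operatorname{Rc}=\tfrac{1}{4}B$, you write $-\tfrac{1}{4}B$) is a trace-convention matter and harmless, since the argument only needs nondegeneracy of $B$ when $\lambda\neq 0$. For the bi-invariance of $J$, however, the paper argues through curvature: from $R(X,Y)Z=-\tfrac{1}{4}[[X,Y],Z]$ and the pair symmetry of $R$ together with $\nabla J\equiv 0$ it deduces $R(JX,JY)=-R(X,Y)$, hence $\operatorname{ad}_{[JX,JY]+[X,Y]}=0$, and then uses that $\mathfrak{g}$ is centerless (i.e.\ the already-established semisimplicity) to conclude $J$ is \emph{anti-abelian}, finally combining this with the vanishing of the Nijenhuis tensor to get condition (\ref{biinvariant}). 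You instead feed $\nabla_X Y=\tfrac{1}{2}[X,Y]$ directly into $\nabla_X(JY)=J\nabla_X Y$ to get $[X,JY]=J[X,Y]$ at once, then symmetrize via antisymmetry of the bracket. Your route is shorter, avoids both the curvature tensor and the integrability of $J$, and in fact proves something stronger: for \emph{any} bi-invariant metric, anti-K\"ahler forces $J$ bi-invariant, with no Einstein or semisimplicity hypothesis and no appeal to the center being trivial. What the paper's longer detour buys is the intermediate identity $R(JX,JY)=-R(X,Y)$, valid on any anti-K\"ahler manifold, which is of independent interest; but as a proof of the stated proposition your argument is cleaner and logically leaner.
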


\begin{proof}
  Since $g$ is a bi-invariant metric on $G$, we have $\operatorname{Rc}(X,Y) = \frac{1}{4}B(X,Y)$ for all $X,Y$ in $\mathfrak{g}$, where $B$ is the Killing form of $\mathfrak{g}$. And so, from the hypothesis that $g$ is an Einstein metric with non-vanishing cosmological constant, it follows
  that $\mathfrak{g}$ is a semisimple Lie algebra (Cartan's criterion).

  Next, we show that $J$ is a bi-invariant complex structure on $G$. Again, since $g$ is a bi-invariant metric on $G$,
  we have $R(X,Y)Z=-\frac{1}{4}[[X,Y],Z]$ for all $X,Y,Z$ in $\mathfrak{g}$. Since $(G,g,J)$ is
  an anti-K\"{a}hler manifold, it is easy to see that $R(JX,JY)=-R(X,Y)$ for all $X,Y$ in $\mathfrak{X}(M)$,
  because the \textit{symmetry by pairs} of the Riemannian curvature tensor of $(M,g)$ and $(\nabla J)\equiv 0$.
  This clearly forces that the complex structure $J$ is \textit{anti-abelian}; i.e.\ $[JX,JY]=-[X,Y]$ for all $X,Y$ in $\mathfrak{g}$,
  since $\mathfrak{g}$ is centerless. From the vanishing of the Nijenhuis tensor (see (\ref{N})) and the anti-abelian property of $J$, we have that $J$ is bi-invariant complex structure.
\end{proof}

\begin{remark}
There exist Lie groups admitting (non-flat) Ricci-flat anti-K\"{a}hler structures where the metric and the
complex structure are bi-invariant. Such Lie group is necessarily a solvable Lie group with vanishing Killing form.
\end{remark}

\section{Anti-K\"{a}hler geometry and abelian complex structures}

In this section, we study left invariant anti-K\"{a}hler structures with abelian complex structures. We begin with an elementary observation
about the Levi-Civita connection under the mentioned hypothesis.

\begin{proposition}\label{levicivitaabeliana}
Let $G$ be a Lie group admitting a left invariant anti-K\"{a}hler structure $(g,J)$
with $J$ an abelian complex structure on $G$. Then the Levi-Civita connection
of $(G,g)$ is completely determined just by the complex structure $J$ and the Lie algebra $\mathfrak{g}$:
\begin{eqnarray}
  \nabla_{X}Y = \tfrac{1}{2}\left( [X,Y] - J[X,JY] \right),\, \forall X,Y \in \mathfrak{g}
\end{eqnarray}
\end{proposition}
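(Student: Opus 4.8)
The plan is to bypass the Koszul formula entirely and instead assemble the connection from two structural identities that are already in hand under the present hypotheses: that the Levi-Civita connection is torsion-free and that it anti-commutes with $J$ in the first slot. Recall that Proposition \ref{Jabelianasaleconnection} establishes, for exactly a left invariant anti-K\"ahler structure with $J$ abelian, the relation $\nabla_{JX}Y = -J\nabla_X Y$ for all $X,Y\in\mathfrak{g}$, while the anti-K\"ahler condition $\nabla J\equiv 0$ supplies $\nabla_X(JY)=J\nabla_X Y$.

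First I would compute the bracket $[X,JY]$ by means of the torsion-free identity $[U,V]=\nabla_U V-\nabla_V U$, applied to $U=X$, $V=JY$:
\[
[X,JY] = \nabla_X(JY) - \nabla_{JY}X.
\]
Into the first term I substitute $\nabla_X(JY)=J\nabla_X Y$ (parallelism of $J$), and into the second I substitute $\nabla_{JY}X=-J\nabla_Y X$, which is precisely Proposition \ref{Jabelianasaleconnection} read for the pair $(Y,X)$ in place of $(X,Y)$. This yields
\[
[X,JY] = J\nabla_X Y + J\nabla_Y X = J\bigl(\nabla_X Y + \nabla_Y X\bigr).
\]
Applying $J$ to both sides and using $J^2=-\operatorname{Id}$ gives the identity $-J[X,JY] = \nabla_X Y + \nabla_Y X$.

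To finish I would combine this with the plain torsion-free identity $[X,Y]=\nabla_X Y - \nabla_Y X$. Adding the two relations cancels the $\nabla_Y X$ terms and leaves $[X,Y]-J[X,JY]=2\nabla_X Y$, which is exactly the asserted formula after division by two. I do not expect a genuine obstacle here: the whole argument is a two-line consequence of Proposition \ref{Jabelianasaleconnection}. The only point that requires attention is to invoke that proposition for $(Y,X)$ rather than $(X,Y)$, so that it is $\nabla_{JY}X$ and not $\nabla_{JX}Y$ that gets rewritten; once that substitution is in place, the abelian condition $[JX,JY]=[X,Y]$ need not be used explicitly, since it has already been absorbed into the antecedent proposition.
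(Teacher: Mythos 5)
Your proof is correct and takes essentially the same route as the paper's: both arguments derive the formula in two lines from torsion-freeness, $\nabla J\equiv 0$, and Proposition \ref{Jabelianasaleconnection}. The only cosmetic difference is that the paper expands $[JX,Y]$ to get $\nabla_X Y=\tfrac{1}{2}([X,Y]+J[JX,Y])$ and then invokes the abelian identity $[JX,Y]=-[X,JY]$ at the end, whereas you expand $[X,JY]$ directly (applying the proposition to the pair $(Y,X)$), which legitimately absorbs that last substitution.
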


\begin{proof}
  Since $(\nabla J)\equiv 0$ and $\nabla$ is torsion-free, it is easy to see that
  $\forall X,Y$ in $\mathfrak{X}(G)$
\begin{eqnarray*}
  [JX,Y] - J[X,Y] & = & \nabla_{JX} Y - J\nabla_{X}Y.
\end{eqnarray*}
Combining this with Proposition \ref{Jabelianasaleconnection}, we have
\begin{eqnarray*}
           \nabla_{X}Y &=& \tfrac{1}{2}\left( [X,Y] + J[JX,Y]\right)\\
                       &\stackrel{(\ref{abeliancomplexdefinition})}{=}  & \tfrac{1}{2}\left( [X,Y] - J[X,JY]\right)
\end{eqnarray*}
for all $X,Y$ in $\mathfrak{g}$.
\end{proof}

\begin{corollary}
  Let $G$ be a Lie group and let $(g, J)$ be a left invariant anti-K\"{a}hler structure on $G$ where $J$ is an abelian complex structure on $G$. If $g$ is a bi-invariant metric on $G$ then the Lie algebra of $G$ is commutative.
\end{corollary}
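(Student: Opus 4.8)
The plan is to compare two expressions for the Levi-Civita connection of $(G,g)$: the one provided by Proposition \ref{levicivitaabeliana}, which uses that $(g,J)$ is an abelian anti-K\"ahler structure, and the standard one valid for any bi-invariant metric.

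First I would record that bi-invariance of $g$ means exactly that the induced inner product on $\mathfrak{g}$ is $\operatorname{ad}$-invariant, i.e.\ $g([X,Y],Z) = -g(Y,[X,Z])$ for all $X,Y,Z \in \mathfrak{g}$. Substituting this into the Koszul formula (\ref{koszul}) one checks that $g([Z,X],Y) = g([Y,Z],X)$, so the last two terms cancel, leaving the classical identity
\[
\nabla_X Y = \tfrac{1}{2}[X,Y], \qquad \forall X,Y \in \mathfrak{g}.
\]
This computation is routine and is the only place where bi-invariance is used.

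Next, since $(g,J)$ is a left invariant anti-K\"ahler structure with $J$ an abelian complex structure, Proposition \ref{levicivitaabeliana} gives $\nabla_X Y = \tfrac{1}{2}\left([X,Y] - J[X,JY]\right)$. Equating this with $\tfrac{1}{2}[X,Y]$ immediately yields $J[X,JY] = 0$ for all $X,Y \in \mathfrak{g}$.

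Finally, I would conclude using the invertibility of $J$: from $J[X,JY]=0$ and $J$ a linear isomorphism of $\mathfrak{g}$ we get $[X,JY]=0$, and since $Y \mapsto JY$ is a bijection of $\mathfrak{g}$ this is equivalent to $[X,Z]=0$ for all $X,Z \in \mathfrak{g}$, i.e.\ $\mathfrak{g}$ is commutative. I do not expect any genuine obstacle; the argument is a direct comparison of the two connection formulas, and the only minor care required is the sign bookkeeping needed to derive $\nabla_X Y = \tfrac{1}{2}[X,Y]$ from $\operatorname{ad}$-invariance.
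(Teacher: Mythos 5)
Your proposal is correct and is essentially the paper's intended argument: the corollary is stated immediately after Proposition \ref{levicivitaabeliana} precisely because comparing its formula $\nabla_X Y = \tfrac{1}{2}\left([X,Y] - J[X,JY]\right)$ with the classical bi-invariant identity $\nabla_X Y = \tfrac{1}{2}[X,Y]$ forces $J[X,JY]=0$ for all $X,Y\in\mathfrak{g}$. Your sign bookkeeping in deriving $\nabla_X Y = \tfrac{1}{2}[X,Y]$ from $\operatorname{ad}$-invariance via the Koszul formula (\ref{koszul}) is accurate, and the final step using the invertibility of $J$ is exactly right.
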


The following proposition gives an important obstruction
to a Lie group admitting a left invariant anti-K\"{a}hler
structure $(g,J)$ with $J$ an abelian complex structure on $G$.

\begin{proposition}
  If $G$ is a $2n$-dimensional Lie group admitting a left invariant anti-K\"{a}hler
  structure $(g,J)$ with $J$ an abelian complex structure,
  then $\mathfrak{g}$ is a unimodular Lie algebra, i.e. for all $X$ in $\mathfrak{g}$
\begin{eqnarray*}
  \operatorname{Tr}(\operatorname{ad}_X) &=& 0.
\end{eqnarray*}
\end{proposition}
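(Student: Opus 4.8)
The plan is to reduce the statement to the vanishing of $\operatorname{Tr}(\operatorname{ad}_X)$ and to extract this from the explicit shape of the Levi-Civita connection. First I would read the formula of Proposition \ref{levicivitaabeliana} as an identity between endomorphisms of $\mathfrak{g}$: writing $\operatorname{ad}_X$ for the map $Y \mapsto [X,Y]$ and $\nabla_X$ for the map $Y \mapsto \nabla_X Y$, the relation $\nabla_X Y = \tfrac12([X,Y]-J[X,JY])$ becomes
\[
  \nabla_X = \tfrac{1}{2}\bigl( \operatorname{ad}_X - J\circ \operatorname{ad}_X \circ J \bigr),
\]
since $[X,JY] = (\operatorname{ad}_X\circ J)(Y)$. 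In this way the whole problem is transferred to the level of traces of compositions of $\operatorname{ad}_X$ and $J$.

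The second ingredient is that each operator $\nabla_X$ is traceless. This is exactly the content of the metric-compatibility identity \eqref{skewnabla}, which says that $\nabla_X$ is skew-adjoint for the inner product $\ip$ induced by $g$, and any endomorphism skew-adjoint with respect to a nondegenerate symmetric bilinear form has vanishing trace. I would then take the trace of the displayed identity and use the cyclic invariance of the trace together with $J^2 = -\operatorname{Id}$ to evaluate the second summand:
\[
  \operatorname{Tr}(J\circ \operatorname{ad}_X \circ J) = \operatorname{Tr}(\operatorname{ad}_X \circ J^2) = -\operatorname{Tr}(\operatorname{ad}_X).
\]
Combining the two facts gives $0 = \operatorname{Tr}(\nabla_X) = \tfrac12\bigl(\operatorname{Tr}(\operatorname{ad}_X) + \operatorname{Tr}(\operatorname{ad}_X)\bigr) = \operatorname{Tr}(\operatorname{ad}_X)$, which is precisely the asserted unimodularity.

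The one point that deserves care is the tracelessness of a skew-adjoint operator in this neutral (signature $(n,n)$) setting, where the usual orthonormal-basis diagonalization of the positive-definite case is unavailable. I would argue algebraically instead: in any basis with symmetric invertible Gram matrix $G$, the skew-adjointness \eqref{skewnabla} reads $A^{\top}G = -GA$, hence $A^{\top} = -GAG^{-1}$ and $\operatorname{Tr}(A) = \operatorname{Tr}(A^{\top}) = -\operatorname{Tr}(A)$, forcing $\operatorname{Tr}(A)=0$; equivalently, $\nabla_X$ lies in the orthogonal Lie algebra of $(\mathfrak{g},\ip)$, all of whose elements are traceless. Everything else, namely the endomorphism rewriting and the cyclicity computation, is routine, so this signature subtlety is the only genuine obstacle.
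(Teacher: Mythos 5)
Your proof is correct, and it rests on the same two ingredients as the paper's own argument: the explicit connection formula of Proposition \ref{levicivitaabeliana} and the skew-adjointness (\ref{skewnabla}) of each operator $\nabla_X$. The difference lies in the execution. The paper extracts the trace by hand: from $0=2g(\nabla_X Y,Y)$ it derives the scalar identity $g([X,Y],Y)=g([X,JY],JY)$, then computes $\operatorname{Tr}(\operatorname{ad}_X)$ twice, once in a pseudo-orthonormal basis $\{Y_1,\dots,Y_{2n}\}$ and once in the $J$-rotated basis $\{JY_1,\dots,JY_{2n}\}$, using the anti-isometry property $g(JU,JV)=-g(U,V)$ to flip the causal characters, and adds the two expressions so that each summand vanishes by the scalar identity. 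You instead package the connection formula as the operator identity $\nabla_X=\tfrac12\bigl(\operatorname{ad}_X - J\circ\operatorname{ad}_X\circ J\bigr)$ and take traces invariantly, with $\operatorname{Tr}(J\circ\operatorname{ad}_X\circ J)=\operatorname{Tr}(\operatorname{ad}_X\circ J^2)=-\operatorname{Tr}(\operatorname{ad}_X)$ by cyclicity. This buys two things. First, the neutral-signature subtlety is isolated in a single clean algebraic lemma --- a skew-adjoint endomorphism of a nondegenerate symmetric bilinear form is traceless --- and your Gram-matrix argument $A^{\top}G=-GA$ is the right way to prove it; note that the same lemma is implicitly what licenses the paper's pseudo-orthonormal trace formula $\operatorname{Tr}(A)=\sum_i \langle Y_i,Y_i\rangle\langle AY_i,Y_i\rangle$, so the paper has the identical signature issue, merely hidden in the basis computation. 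Second, your version makes visible that once the connection formula is granted, only $J^2=-\operatorname{Id}$ and metric compatibility are used: the anti-Hermitian condition enters your argument solely through the proof of Proposition \ref{levicivitaabeliana} itself, whereas the paper invokes it a second time through the rotated orthonormal basis. The paper's computation is more elementary (no cyclicity of the trace, only sums over bases); yours is shorter, coordinate-free, and cleanly separates the structural inputs.
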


\begin{proof}
Combining the above proposition and condition (\ref{skewnabla}), we have for all $X,Y$ in $\mathfrak{g}$
\begin{eqnarray*}
  0 & = & 2g(\nabla_X Y , Y)\\
    & = &  g([X,Y] - J[X,JY] , Y)\\
    & = &  g([X,Y],Y) - g([X,JY],JY).
\end{eqnarray*}

Now we consider the inner product $\ip$ on $\mathfrak{g}$ induced by the left invariant metric $g$ on $G$
and let $\{Y_1,\ldots,Y_{2n}\}$ be an orthonormal basis of $(\mathfrak{g},\ip)$. By using this basis,
we have for all $X$ in $\mathfrak{g}$
\begin{eqnarray*}
  \operatorname{Tr}(\operatorname{ad}_X) & = & \sum_{i=1}^{2n} \ipa{Y_i}{Y_i}\ipa{[X,Y_i]}{Y_i}.
\end{eqnarray*}
On the other hand, by using the orthonormal basis $\{JY_1,\ldots,JY_{2n}\}$ of $(\mathfrak{g},\ip)$
we have
\begin{eqnarray*}
  \operatorname{Tr}(\operatorname{ad}_X) & = & \sum_{i=1}^{2n} \ipa{JY_i}{JY_i}\ipa{[X,JY_i]}{JY_i}\\
                                         &\stackrel{(\ref{norden})}{=}  &  \sum_{i=1}^{2n} -\ipa{Y_i}{Y_i}\ipa{[X,JY_i]}{JY_i}.
\end{eqnarray*}
By adding the last two expressions, we have for all  $X$ in $\mathfrak{g}$
\begin{eqnarray*}
  2 \operatorname{Tr}(\operatorname{ad}_X) & = & \sum_{i=1}^{2n} \ipa{Y_i}{Y_i}(\ipa{[X,Y_i]}{Y_i}-\ipa{[X,JY_i]}{JY_i}),
\end{eqnarray*}
but, it follows from the first expression that each summand in the last formula for $\operatorname{Tr}(\operatorname{ad}_X)$
is equal to zero, and so $\operatorname{Tr}(\operatorname{ad}_X)=0$.
\end{proof}

Under the hypothesis of the previous proposition, we have as a corollary that the abelian complex structure $J$ is symmetric for the Killing form.

\begin{corollary}
  If $G$ is a Lie group admitting a left invariant anti-K\"{a}hler
  structure $(g,J)$ with $J$ an abelian complex structure,
  then
  \begin{eqnarray*}
    B(JX,JY) &=& -B(X,Y),\, \forall X,Y \in \mathfrak{g},
  \end{eqnarray*}
  where $B$ is the Killing form of $\mathfrak{g}$.
\end{corollary}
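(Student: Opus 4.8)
The plan is to prove the equivalent (and more convenient) statement that $J$ is \emph{self-adjoint} for the Killing form, i.e.\ $B(JX,Y)=B(X,JY)$ for all $X,Y\in\mathfrak{g}$. Once this is available, the corollary is immediate: since $J^{2}=-\operatorname{Id}$,
\[
B(JX,JY)=B(X,J^{2}Y)=-B(X,Y).
\]
So the whole argument reduces to establishing self-adjointness of $J$ with respect to $B(X,Y)=\operatorname{Tr}(\operatorname{ad}_{X}\operatorname{ad}_{Y})$.

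The first step is to rewrite the abelian hypothesis as an operator identity. For arbitrary $Z\in\mathfrak{g}$ I would write $Z=J(-JZ)$ and use $[JX,JW]=[X,W]$ to get $[JX,Z]=[X,-JZ]=-[X,JZ]$; equivalently $\operatorname{ad}_{JX}=-\operatorname{ad}_{X}\circ J$ for every $X$. The second step is to feed the unimodularity just established into this. Applying $\operatorname{Tr}(\operatorname{ad}_{W})=0$ to $W=JX$ and using the identity above gives the \emph{linear} relation
\[
\operatorname{Tr}(\operatorname{ad}_{X}\circ J)=-\operatorname{Tr}(\operatorname{ad}_{JX})=0,\qquad \forall X\in\mathfrak{g}.
\]

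The third step promotes this linear vanishing to the quadratic identity I actually need, using the homomorphism property $\operatorname{ad}_{[X,Y]}=[\operatorname{ad}_{X},\operatorname{ad}_{Y}]$ together with the cyclicity of the trace. Applying the relation of the previous step to $W=[X,Y]$,
\[
0=\operatorname{Tr}(\operatorname{ad}_{[X,Y]}\,J)=\operatorname{Tr}(\operatorname{ad}_{X}\operatorname{ad}_{Y}J)-\operatorname{Tr}(\operatorname{ad}_{Y}\operatorname{ad}_{X}J),
\]
and cyclicity rewrites $\operatorname{Tr}(\operatorname{ad}_{Y}\operatorname{ad}_{X}J)=\operatorname{Tr}(\operatorname{ad}_{X}J\operatorname{ad}_{Y})$, so that $\operatorname{Tr}(\operatorname{ad}_{X}\operatorname{ad}_{Y}J)=\operatorname{Tr}(\operatorname{ad}_{X}J\operatorname{ad}_{Y})$. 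Finally, using $\operatorname{ad}_{JX}=-\operatorname{ad}_{X}J$ and $\operatorname{ad}_{JY}=-\operatorname{ad}_{Y}J$ one computes $B(JX,Y)=\operatorname{Tr}(\operatorname{ad}_{JX}\operatorname{ad}_{Y})=-\operatorname{Tr}(\operatorname{ad}_{X}J\operatorname{ad}_{Y})$ and $B(X,JY)=\operatorname{Tr}(\operatorname{ad}_{X}\operatorname{ad}_{JY})=-\operatorname{Tr}(\operatorname{ad}_{X}\operatorname{ad}_{Y}J)$; the identity just proved makes these two traces equal, whence $B(JX,Y)=B(X,JY)$ and the corollary follows.

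I expect the main obstacle to be conceptual rather than computational: the desired conclusion is quadratic in $\operatorname{ad}$, whereas unimodularity is only a linear condition, so it is not obvious a priori that unimodularity suffices (indeed the conclusion fails for non-unimodular abelian examples, such as the two-dimensional $[e_{1},e_{2}]=e_{1}$ with $Je_{1}=e_{2}$). The crux is the observation in the second step that the abelian identity $\operatorname{ad}_{JX}=-\operatorname{ad}_{X}J$ turns unimodularity precisely into $\operatorname{Tr}(\operatorname{ad}_{X}J)=0$; after that, the Jacobi identity (the homomorphism property of $\operatorname{ad}$) applied to $[X,Y]$, combined with cyclicity, upgrades this linear fact to the quadratic trace identity and forces self-adjointness of $J$.
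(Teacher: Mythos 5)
Your proof is correct and takes essentially the same route as the paper: both derive the operator identity $\operatorname{ad}_{JX}=-\operatorname{ad}_{X}\circ J$ from the abelian hypothesis, apply the unimodularity of $\mathfrak{g}$ to $J[X,Y]$ (equivalently, $\operatorname{Tr}(\operatorname{ad}_{[X,Y]}J)=0$), and use $\operatorname{ad}_{[X,Y]}=[\operatorname{ad}_{X},\operatorname{ad}_{Y}]$ together with cyclicity of the trace to get the self-adjointness $B(JX,Y)=B(X,JY)$, from which $B(JX,JY)=-B(X,Y)$ via $J^{2}=-\operatorname{Id}$. The only cosmetic difference is that you isolate the intermediate linear fact $\operatorname{Tr}(\operatorname{ad}_{X}J)=0$ as a separate step, which the paper absorbs into the single computation $\operatorname{Tr}(\operatorname{ad}_{J[X,Y]})=0$.
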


\begin{proof}
First, it follows from the above proposition that $\operatorname{Tr}(\operatorname{ad}_{J[X,Y]}) = 0$
for all $X,Y$ in $\mathfrak{g}$. Since $\operatorname{ad}_{J[X,Y]} = -\operatorname{ad}_{[X,Y]}J = \operatorname{ad}_{Y}\operatorname{ad}_{X}J
-\operatorname{ad}_{X}\operatorname{ad}_{Y}J$, we have
\begin{eqnarray*}
  B(JX,Y) & = & B(Y,JX) \\
          & = & \operatorname{Tr}(\operatorname{ad}_{Y}\operatorname{ad}_{JX})\\
          & = & -\operatorname{Tr}(\operatorname{ad}_{Y}\operatorname{ad}_{X}J)\\
          & = & -\operatorname{Tr}(\operatorname{ad}_{X}\operatorname{ad}_{Y}J)\\
          & = & \operatorname{Tr}(\operatorname{ad}_{X}\operatorname{ad}_{JY})\\
          & = & B(X,JY),\, \forall X,Y \in \mathfrak{g},
\end{eqnarray*}
and the corollary follows.
\end{proof}

We state and prove the next proposition which we will use to prove the main theorem of this section (see Theorem \ref{abelianflat}).

\begin{proposition}
If $G$ is a Lie group admitting a left invariant anti-K\"{a}hler
  structure $(g,J)$ with $J$ an abelian complex structure,
  then for all $X,Y,Z$ in $\mathfrak{g}$
\begin{eqnarray*}
  \nabla_{X}\nabla_{Y}Z = \nabla_{Y} \nabla_{X}Z.
\end{eqnarray*}
\end{proposition}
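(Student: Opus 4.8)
The plan is to expand both iterated derivatives with the explicit connection formula of Proposition~\ref{levicivitaabeliana}, $\nabla_X Y=\tfrac{1}{2}\left([X,Y]-J[X,JY]\right)$, and then to show that the difference $\nabla_X\nabla_Y Z-\nabla_Y\nabla_X Z$ collapses to zero using only the Jacobi identity together with the algebraic identities satisfied by an abelian $J$.

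First I would collect the bookkeeping tools. From $\nabla J\equiv 0$ one has $\nabla_X(JW)=J\nabla_X W$, which lets me pull every $J$ past an inner covariant derivative; and from the abelian identity $[JX,JY]=[X,Y]$ one deduces the pointwise relations $[JU,V]=-[U,JV]$ and $[JX,Y]=[JY,X]$. These are exactly what is needed to put triple brackets into a shape where Jacobi applies.

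Then I would substitute $W=\nabla_Y Z=\tfrac{1}{2}\left([Y,Z]-J[Y,JZ]\right)$ into $\nabla_X W$, use $\nabla_X\circ J=J\circ\nabla_X$ on the inner $J$, and expand to
\[
\nabla_X\nabla_Y Z=\tfrac{1}{4}\big([X,[Y,Z]]-J[X,J[Y,Z]]-J[X,[Y,JZ]]-[X,J[Y,JZ]]\big),
\]
with the symmetric expression for $\nabla_Y\nabla_X Z$. Subtracting, I would group the four differences as follows. The $J$-free difference equals $[[X,Y],Z]$ by Jacobi, while the last one, $-\big([X,J[Y,JZ]]-[Y,J[X,JZ]]\big)$, collapses through $[JU,V]=-[U,JV]$ to $[[JX,JY],Z]$ and hence to $-[[X,Y],Z]$ by abelianness; these two cancel. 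The two remaining ``one-$J$'' differences cancel each other: the term $-J[X,[Y,JZ]]+J[Y,[X,JZ]]$ reduces by Jacobi to $-J[[X,Y],JZ]=J[J[X,Y],Z]$, while $-J[X,J[Y,Z]]+J[Y,J[X,Z]]$ reduces, after $[JX,Y]=[JY,X]$ kills the outer correction $[[JX,Y]-[JY,X],Z]$, to $-J[J[X,Y],Z]$.

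The only real obstacle is the organization of the $J$'s: one must repeatedly trade $[J\,\cdot\,,\,\cdot\,]$ for $[\,\cdot\,,J\,\cdot\,]$ and apply Jacobi only after the abelian identities have rearranged the bracket, so the danger is bookkeeping error rather than conceptual difficulty. A clean conceptual check is to complexify: writing $\mathfrak{g}_{\mathbb{C}}=\mathfrak{g}^{1,0}\oplus\mathfrak{g}^{0,1}$ for the $\pm\sqrt{-1}$-eigenspaces of $J$, abelianness makes each summand an abelian subalgebra, and the connection formula gives $\nabla_X Y=0$ when $X,Y$ share an eigenspace and $\nabla_X Y=\pi[X,Y]$ (with $\pi$ the projection onto the eigenspace containing $Y$) otherwise. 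For pure-type arguments both sides vanish unless $X,Y$ lie in the eigenspace opposite to $Z$, and there $\nabla_X\nabla_Y Z=\pi[X,\pi[Y,Z]]=\pi[X,[Y,Z]]$, the discarded summand lying in the abelian subalgebra that also contains $X$; the claim then follows from Jacobi and $[X,Y]=0$.
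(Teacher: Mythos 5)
Your main computation is correct and is essentially the paper's own proof: both expand $\nabla_X\nabla_Y Z$ via Proposition~\ref{levicivitaabeliana} into the four-term expression $\tfrac14\big([X,[Y,Z]]-[X,J[Y,JZ]]-J[X,J[Y,Z]]-J[X,[Y,JZ]]\big)$ and cancel the antisymmetrized difference with Jacobi plus abelianness; the only divergence is bookkeeping. The paper first converts terms to the shape $[JX,[JY,Z]]$, $J[JX,[JY,JZ]]$, etc.\ using $[JU,JV]=[U,V]$, applies Jacobi to the four resulting pairs, and then uses abelianness once more, whereas you pair the eight terms directly and shuttle $J$'s via the derived identities $[JU,V]=-[U,JV]$ and $[JX,Y]=[JY,X]$; the two schemes produce the same cancellations. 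One sign slip in your write-up: the grouping $-\big([X,J[Y,JZ]]-[Y,J[X,JZ]]\big)$ collapses to $-[[JX,JY],Z]$, not $[[JX,JY],Z]$; your final value $-[[X,Y],Z]$ is nevertheless the correct one, so the cancellation against the $J$-free difference goes through. What your proposal adds beyond the paper is the complexification, which is a genuinely different and more conceptual argument, not just a check: abelianness of $J$ is equivalent to the $\pm\sqrt{-1}$-eigenspaces $\mathfrak{g}^{1,0},\mathfrak{g}^{0,1}\subset\mathfrak{g}\otimes\mathbb{C}$ being abelian subalgebras, the connection formula becomes $\nabla_X Y=\pi[X,Y]$ with $\pi$ the projection onto the eigenspace of $Y$ (since $\tfrac12(\operatorname{Id}-\sqrt{-1}J)$ and $\tfrac12(\operatorname{Id}+\sqrt{-1}J)$ are exactly those projections), and on pure types the claim reduces to $\pi[[X,Y],Z]=0$ with $X,Y$ in a common abelian eigenspace. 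Your handling of the subtleties there is also right: both iterated derivatives vanish unless $X,Y$ lie in the eigenspace opposite to $Z$, and the discarded component of $[Y,Z]$ brackets to zero against $X$ because it lies in the abelian subalgebra containing $X$; by complex bilinearity this covers all of $\mathfrak{g}$. This eigenspace argument explains in two lines why the bracket gymnastics of the paper's proof must close up, at the modest cost of setting up the complexified framework.
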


\begin{proof}
  Let $X,Y,Z$ in $\mathfrak{g}$. From Proposition \ref{levicivitaabeliana}, we have
  \begin{eqnarray*}
    \nabla_X\nabla_Y Z  & = & \frac{1}{2}\nabla_X ([Y,Z]-J[Y,JZ])\\
                        & = & \frac{1}{4}( [X,[Y,Z]-J[Y,JZ]] - J[X,J([Y,Z]-J[Y,JZ])]\\
                        & = & \frac{1}{4}( [X,[Y,Z]] - [X,J[Y,JZ]] - J[X,J[Y,Z]] - J[X,[Y,JZ]] ),
  \end{eqnarray*}
and so $4(\nabla_X\nabla_Y Z - \nabla_Y\nabla_X Z )$ is equal to
\begin{eqnarray*}
 & = &  [X,[Y,Z]] - [X,J[Y,JZ]] - J[X,J[Y,Z]] - J[X,[Y,JZ]]\\
 &   & -[Y,[X,Z]] + [Y,J[X,JZ]] + J[Y,J[X,Z]] + J[Y,[X,JZ]]\\
 &\stackrel{(\ref{abeliancomplexdefinition})}{=}  & [X,[Y,Z]] - [JX,[JY,Z]] + J[JX,[JY,JZ]] - J[X,[Y,JZ]]\\
 &                                                &-[Y,[X,Z]] + [JY,[JX,Z]] - J[JY,[JX,JZ]] + J[Y,[X,JZ]]\\
 &\stackrel{\mbox{Jacobi}}{=}  & [[X,Y],Z] + [[JY,JX],Z] + J[[JX,JY],JZ] + J[[Y,X],JZ]\\
  &\stackrel{(\ref{abeliancomplexdefinition})}{=}  & [[X,Y],Z] + [[Y,X],Z]+ J[[X,Y],JZ]+J[[Y,X],JZ]\\
  &=&0,
\end{eqnarray*}
as desired.
\end{proof}

\begin{theorem}\label{abelianflat}
 Let $(g,J)$ be a left invariant anti-K\"{a}hler structure on a Lie group $G$ such that
 $J$ is an abelian complex structure. Then $(G,g)$ is a flat pseudo-Riemannian Lie group.
\end{theorem}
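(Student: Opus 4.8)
The plan is to reduce the curvature tensor of $(G,g)$, evaluated on left invariant fields, to something depending on $(X,Y)$ only through the bracket $[X,Y]$, and then to play this against the anti-Kähler symmetry of the curvature. First I would recall that for left invariant vector fields $X,Y,Z \in \mathfrak{g}$ the Riemann curvature is
\[
R(X,Y)Z = \nabla_X\nabla_Y Z - \nabla_Y\nabla_X Z - \nabla_{[X,Y]}Z.
\]
By the preceding proposition, $\nabla_X\nabla_Y Z = \nabla_Y\nabla_X Z$, so the first two terms cancel and we are left with
\[
R(X,Y)Z = -\nabla_{[X,Y]}Z.
\]
The crucial consequence is that $R(X,Y)Z$ depends on the pair $(X,Y)$ only through the Lie bracket $[X,Y]$.

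Next I would bring in the abelian hypothesis. Since $[JX,JY] = [X,Y]$ for all $X,Y \in \mathfrak{g}$ by (\ref{abeliancomplexdefinition}), the displayed formula gives immediately
\[
R(JX,JY)Z = -\nabla_{[JX,JY]}Z = -\nabla_{[X,Y]}Z = R(X,Y)Z.
\]

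The last ingredient is the anti-Kähler property. Because $\nabla J \equiv 0$, the curvature operator commutes with $J$, and combined with the pair symmetry of $R$ this yields the purity relation $R(JX,JY,Z,W) = -R(X,Y,Z,W)$, i.e.\ $R(JX,JY) = -R(X,Y)$ as operators, exactly as already used in Section 3 and recorded in the Remark on pure curvature. Feeding this into the previous identity gives $R(X,Y)Z = R(JX,JY)Z = -R(X,Y)Z$, hence $R(X,Y)Z = 0$ for all $X,Y,Z \in \mathfrak{g}$. As $R$ is a left invariant tensor, it vanishes identically on $G$, so $(G,g)$ is flat.

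The only step that requires genuine (if routine) care is justifying the purity identity $R(JX,JY) = -R(X,Y)$ from $\nabla J \equiv 0$, and I expect this to be the main obstacle in writing the proof cleanly: one first gets $R(X,Y)JZ = J R(X,Y)Z$ from parallelism of $J$, rewrites it via symmetry of $J$ for $g$ as $R(X,Y,JZ,W) = R(X,Y,Z,JW)$, then transports the $J$ into the first two arguments using the pair symmetry $R(X,Y,Z,W) = R(Z,W,X,Y)$ to obtain $R(JX,Y,Z,W) = R(X,JY,Z,W)$, and finally sets the second argument to $JY$ to conclude $R(JX,JY,Z,W) = R(X,J^2Y,Z,W) = -R(X,Y,Z,W)$. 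Everything else in the argument is a direct substitution.
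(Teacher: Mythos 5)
Your proof is correct and follows essentially the same route as the paper: it invokes the preceding proposition to reduce $R(X,Y)Z$ to $\mp\nabla_{[X,Y]}Z$, uses the abelian condition $[JX,JY]=[X,Y]$ to get $R(JX,JY)Z = R(X,Y)Z$, and plays this against $R(JX,JY) = -R(X,Y)$ obtained from $\nabla J \equiv 0$ together with the pair symmetry of the curvature tensor. The only differences are immaterial: a sign convention for $R$ (the paper writes $R(X,Y)Z=\nabla_{[X,Y]}Z$), and the fact that you spell out the purity computation $R(JX,JY,Z,W)=-R(X,Y,Z,W)$ that the paper merely cites.
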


\begin{proof}
  It is sufficient to prove that $R(X,Y)Z =0$ for all $X,Y,Z$ in $\mathfrak{g}$.
  By the above proposition, we have $R(X,Y)Z = \nabla_{[X,Y]}Z$
  and therefore
  \begin{eqnarray*}
  R(JX,JY)Z = \nabla_{[JX,JY]}Z &=& R(X,Y)Z.
  \end{eqnarray*}
  While on the other hand, by virtue of the \textit{symmetry by pairs} of the Riemannian curvature tensor of $(M,g)$ and $(\nabla J)\equiv 0$,
  \begin{eqnarray*}
   R(JX,JY)Z &=& -R(X,Y)Z
  \end{eqnarray*}
and the proof is completed.
\end{proof}

As a consequence of the above theorem and Proposition \ref{levicivitaabeliana} we have the following obstruction: if a Lie group admits a
 left invariant anti-K\"{a}hler structure with abelian complex structure, then its Lie algebra $\mathfrak{g}$ together with the abelian complex structure must satisfy a distinguished $3$-degree polynomial identity.

\begin{corollary}
  Let $(g,J)$ be a left invariant anti-K\"{a}hler structure on a Lie group $G$ such that
 $J$ is an abelian complex structure, then for all $X,Y,Z$ in $\mathfrak{g}$
 \begin{eqnarray*}
   [J[X,Y],Z] &=& J[[X,Y],Z].
 \end{eqnarray*}

\end{corollary}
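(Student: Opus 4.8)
The plan is to combine the flatness established in Theorem~\ref{abelianflat} with the explicit description of the Levi-Civita connection from Proposition~\ref{levicivitaabeliana}, and then to exploit the abelian identity (\ref{abeliancomplexdefinition}) to transfer $J$ across the bracket. The computation splits naturally into three stages: first reduce flatness to the vanishing of $\nabla_{[X,Y]}Z$, then turn that vanishing into a bracket identity via the connection formula, and finally massage the result into the stated form using only the abelian hypothesis as pure algebra.

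First I would extract from the proof of Theorem~\ref{abelianflat} the relation $R(X,Y)Z = \nabla_{[X,Y]}Z$, valid for all $X,Y,Z \in \mathfrak{g}$. Since that theorem asserts that $(G,g)$ is flat, i.e.\ $R\equiv 0$, this immediately yields
\[
\nabla_{[X,Y]}Z = 0, \qquad \forall\, X,Y,Z \in \mathfrak{g}.
\]
Next I would substitute $W := [X,Y]$ into the connection formula $\nabla_{W}Z = \tfrac{1}{2}\left([W,Z]-J[W,JZ]\right)$ of Proposition~\ref{levicivitaabeliana}. The vanishing above then gives $[[X,Y],Z] = J[[X,Y],JZ]$, and applying $J$ to both sides (using $J^2=-\operatorname{Id}$) converts this into the intermediate identity
\[
[[X,Y],JZ] = -J[[X,Y],Z], \qquad \forall\, X,Y,Z \in \mathfrak{g},
\]
which I shall call $(\ast)$. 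The essential point to keep in mind is that $(\ast)$ holds only because $[X,Y]$ lies in the derived algebra $[\mathfrak{g},\mathfrak{g}]$, so $J$ must be allowed to act in the $Z$-slot and cannot be transferred directly onto $[X,Y]$.

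The final and most delicate stage is to rewrite $(\ast)$ as the claimed identity. Here I would use the abelian hypothesis (\ref{abeliancomplexdefinition}) purely algebraically: for any $A,B\in\mathfrak{g}$ one has $[JA,JB]=[A,B]$, and substituting $B=-JZ$ (so that $JB=-J^2Z=Z$) gives $[JA,Z] = -[A,JZ]$. Taking $A=[X,Y]$ and combining with $(\ast)$ produces
\[
[J[X,Y],Z] = -[[X,Y],JZ] = J[[X,Y],Z],
\]
which is exactly the asserted polynomial identity. The main obstacle is precisely the bookkeeping in this last manipulation: because $J[X,Y]$ need not belong to $[\mathfrak{g},\mathfrak{g}]$, one is not allowed to apply $(\ast)$ with $J[X,Y]$ in place of $[X,Y]$. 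The abelian relation must instead be invoked first, to shift $J$ off the derived element and onto $Z$, where $(\ast)$ is genuinely available; once this ordering is respected, the remaining steps are routine.
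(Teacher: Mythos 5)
Your proof is correct and is precisely the argument the paper intends: the corollary is stated there as an immediate consequence of Theorem \ref{abelianflat} and Proposition \ref{levicivitaabeliana}, namely flatness plus the commuting of second covariant derivatives gives $\nabla_{[X,Y]}Z=0$, the connection formula turns this into $[[X,Y],Z]=J[[X,Y],JZ]$, and the abelian identity in the equivalent form $[JA,B]=-[A,JB]$ transfers $J$ onto the derived-algebra slot. Your caution that the intermediate identity is only available for arguments in $[\mathfrak{g},\mathfrak{g}]$, so the abelian relation must be applied first, is well placed, but the route coincides with the paper's.
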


We want to give an example of a left invariant anti-K\"{a}hler structure $(g,J)$ on a Lie group with
$J$ an abelian complex structure. The $6$-dimensional Lie algebras that can be endowed with abelian complex structure
were classified in \cite{AndradaBarberisDotti1}. We focus on the nilpotent Lie algebra $\mathfrak{n}_{7}$
of \cite[Theorem 3.4]{AndradaBarberisDotti1} with its abelian complex structure $J_{-1}$.

\begin{example}
Consider a nilpotent Lie group $N$ with
Lie algebra
\begin{eqnarray*}
  \mathfrak{n}_{7}&:=& \left\{\begin{array}{l}
   [X_1, X_2] = X_4,\, [X_1, X_3] = X_5,\, [X_1, X_4] = X_6,\\
   {[X_2, X_3]} = X_6,\, [X_2, X_4] = -X_5.
  \end{array}\right.
\end{eqnarray*}
Such Lie group $N$ admits an abelian complex structure determined by the linear complex structure
$J:=J_{-1}$ on $\mathfrak{n}_{7}$ defined by
\begin{eqnarray*}
  JX_1=X_2,\,JX_3=-X_4,\, JX_5=-X_6.
\end{eqnarray*}
Consider the left invariant metric $g$ on $N$ such that
\begin{eqnarray*}
  \{X_1+X_5,\, X_1-X_5,\, X_2+X_6,\, X_2-X_6,\, X_3,\, X_4\}
\end{eqnarray*}
is an orthonormal frame field, where $\{X_1+X_5,X_2+X_6,X_3\}$ are spacelike vector fields
and $\{X_1-X_5,X_2-X_6,X_4\}$ are timelike vector fields. In the frame field $\{X_1,\ldots,X_6\}$,
all possible non-zero functions of the form $g(X_i,X_j)$ are
\begin{eqnarray*}
  g(X_1,X_5) \equiv \frac{1}{2},\, g(X_2,X_6) \equiv \frac{1}{2},\, g(X_3,X_3) \equiv 1 ,\, g(X_4,X_4) \equiv -1.
\end{eqnarray*}
It is a simple matter to check that
\begin{eqnarray*}
  \nabla_{X_1} X_1 = -\frac{1}{2}X_3,\,   \nabla_{X_1} X_2 = \frac{1}{2}X_4,\,   \nabla_{X_1} X_3 = X_5,\,   \nabla_{X_1} X_4 = X_6,\\
  \nabla_{X_2} X_1 = -\frac{1}{2}X_4,\,   \nabla_{X_2} X_2 = -\frac{1}{2}X_3,\,   \nabla_{X_2} X_3=X_6,\,   \nabla_{X_2} X_4 = -X_5,
\end{eqnarray*}
and therefore we can prove that $(N,g,J)$ is an anti-K\"{a}hler manifold.
\end{example}

\section{Some $3$-forms associated with left invariant anti-K\"{a}hler structures on Lie groups}

Let $G$ be a Lie group admitting a left invariant anti-K\"{a}hler structure $(g,J)$.
We begin by defining a family of bilinear maps on $\mathfrak{g}$ in the following way: let $\{a_1,\ldots,a_4\}$ be real
constants and $D:\mathfrak{g}\times \mathfrak{g} \rightarrow \mathfrak{g}$ the bilinear map on $\mathfrak{g}$ given by
\begin{eqnarray*}
  D(X,Y) & = & a_{1}\nabla_{X} Y + a_{2}\nabla_{JX}Y + a_{3}J\nabla_{X}Y + a_{4}J\nabla_{JX}Y.
\end{eqnarray*}
Consider the inner product $\ip$ on $\mathfrak{g}$ induced by the metric and let $\beta$ be the covariant $3$-tensor on $\mathfrak{g}$
defined by
\begin{eqnarray*}
  \delta(X,Y,Z) & = & \ipa{D(X,Y)}{Z},\, \forall X,Y,Z\in \mathfrak{g}.
\end{eqnarray*}
Note that $\delta$ is skew-symmetric in the last two arguments, because of property (\ref{skewnabla}) and the anti-K\"{a}hler hypothesis.
Therefore, the skew-symmetric part of $\delta$ is a multiple of
\begin{eqnarray}\label{thetadef}
  \theta(X,Y,Z) & = & \ipa{D(X,Y)}{Z} + \ipa{D(Y,Z)}{X} + \ipa{D(Z,X)}{Y}.
\end{eqnarray}
We want to highlight an important member in this family of skew-symmetric tensors which is defined from
the particular bilinear map
\begin{eqnarray}\label{thetadefB}
D(X,Y) = \nabla_{JX}Y + J\nabla_{X}Y.
\end{eqnarray}
In this case, we have that $\delta$ is a \textit{pure}
tensor on $\mathfrak{g}$ since $D(JX,Y)=D(X,JY)$, $D(JX,Y)=JD(X,Y)$ and $\ipa{J \cdot}{\cdot} = \ipa{\cdot}{J \cdot}$. So $\theta$ is a pure skew-symmetric $3$-tensor on $\mathfrak{g}$.

Now, we consider on $\mathfrak{g}$ the complex vector space structure induced by $J$ $(a+\sqrt{-1}\,b)\cdot X := aX + JbX$
and call $\widehat{\theta}(X,Y,Z) = \theta(X,Y,Z) - \sqrt{-1}\,\theta(JX,Y,Z)$. In this way, we have on $(\mathfrak{g},\mathbb{C})$
a complex skew-symmetric $3$-tensor and $\theta$ is its real part. Furthemore, $\theta$ has the following very nice expression: for all
$X,Y,Z$ in $\mathfrak{g}$
\begin{eqnarray}\label{thetadefcorchete}
              \theta(X,Y,Z) &=&  \ipa{[JX,Y]}{Z} + \ipa{[JY,Z]}{X} + \ipa{[JZ,X]}{Y}.
\end{eqnarray}

Note that the above $\theta$ vanishes identically, for instance, when $J$ is an abelian complex structure (see Proposition \ref{Jabelianasaleconnection})
or when the dimension of $G$ is $4$. If $(g,J)$ is a left invariant anti-K\"{a}hler structure on a Lie group $G$ with
$g$ and $J$ bi-invariant geometric structures, then $\theta$ is a multiple of $\ipa{[JX,Y]}{Z}$.

Here is an important property of the $3$-tensor $\theta$, left invariant anti-K\"{a}hler
structures on Lie groups are determined by the skew-symmetry and pureness of its 3-tensor $\theta$:

\begin{theorem}
  Let $(g,J)$ be a left invariant almost anti-Hermitian structure on a Lie group $G$
  and let $\theta$ be the associated $3$-tensor as defined in (\ref{thetadef}) from (\ref{thetadefB}).
  Then, $(G,g,J)$ is an anti-K\"{a}hler manifold if and only if $\theta$ is skew-symmetric and pure on $\mathfrak{g}$; equivalently,
   \begin{eqnarray}
\label{proofthetaimplicaanti1}\theta(X,Y,Z) &=& - \theta(X,Z,Y) \\
  \nonumber &\mbox{ and}\\
\label{proofthetaimplicaanti2}\theta(JX,Y,Z) &=& \theta(X,JY,Z)
   \end{eqnarray}
 for all $X,Y,Z$ in $\mathfrak{g}$.
 \end{theorem}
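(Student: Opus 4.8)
The plan is to funnel both sides of the equivalence through the single tensor $F(X,Y,Z):=g((\nabla_X J)Y,Z)$ on $\mathfrak g$. By Lemma \ref{nablaJsimetrico} this $F$ is symmetric in its last two arguments, and $(G,g,J)$ is anti-K\"ahler if and only if $F\equiv 0$; so the theorem is exactly the equivalence ``$F\equiv 0$ $\iff$ $\theta$ satisfies (\ref{proofthetaimplicaanti1}) and (\ref{proofthetaimplicaanti2})''. First I would record that, although $\theta$ is defined through the connection via (\ref{thetadefB}), a Koszul computation (\ref{koszul}) makes all connection terms cancel, so $\theta$ is given by the purely algebraic formula (\ref{thetadefcorchete}) for \emph{every} almost anti-Hermitian structure, not only the anti-K\"ahler ones. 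Thus (\ref{proofthetaimplicaanti1}) and (\ref{proofthetaimplicaanti2}) are genuine algebraic conditions on $(\mathfrak g,J,g)$.

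The bridge between the two objects is the identity, valid for any left invariant almost anti-Hermitian structure,
\[
\delta(X,Y,Z)+\delta(X,Z,Y)=-F(X,Y,Z),\qquad \delta(X,Y,Z):=g(\nabla_{JX}Y+J\nabla_X Y,Z),
\]
which follows at once from the metric compatibility (\ref{skewnabla}) and the symmetry of $F$; recall $\theta$ is the cyclic sum of $\delta$. For the forward implication I would assume $F\equiv 0$: then $\delta$ is skew in its last two slots, so its cyclic sum $\theta$ equals $3\,\mathrm{Alt}(\delta)$ and is totally alternating, which is (\ref{proofthetaimplicaanti1}). Moreover the always-valid relations $\delta(JX,Y,Z)=\delta(X,Y,JZ)$ and $\delta(X,JY,Z)=\delta(X,Y,JZ)+F(JX,Y,Z)+F(X,Y,JZ)$ collapse, under $F\equiv 0$, to full purity of $\delta$ and hence of $\theta$, giving (\ref{proofthetaimplicaanti2}). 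This is the computation already indicated in the paragraphs preceding the theorem.

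The substantial direction is the converse. Writing the cyclic-invariant tensor $\theta$ as the sum of its totally symmetric part $\theta_s$ and its totally alternating part $\theta_a$, condition (\ref{proofthetaimplicaanti1}) is exactly $\theta_s=0$. Through the bridge, $\theta_s$ is a nonzero multiple of the cyclic sum $F(X,Y,Z)+F(Y,Z,X)+F(Z,X,Y)$, which, since $F$ is symmetric in its last two slots, is precisely the totally symmetric part of $F$. Hence (\ref{proofthetaimplicaanti1}) forces the totally symmetric component of $F$ to vanish, leaving only its mixed-symmetry component. To remove this I would feed (\ref{proofthetaimplicaanti2}) through the same bridge: using the symmetry of $F$ in its last two arguments and that $J$ is an anti-isometry, so $J$ passes across $g$ by (\ref{norden}), purity of $\theta$ yields relations of the type $F(JX,Y,Z)=\pm F(X,JY,Z)$. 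With these in hand the endgame mimics the proof of Proposition \ref{implicaJparalelo}: starting from $F(X,Y,Z)$ and repeatedly applying the symmetry in the last two slots together with the $J$-transfer relations, and using that $S_3$ is generated by two transpositions, one returns to $-F(X,Y,Z)$ and concludes $F\equiv 0$.

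The main obstacle is precisely this last step: one must check that the \emph{single} purity identity (\ref{proofthetaimplicaanti2}) for the cyclically summed tensor $\theta$ still carries enough information, once unpacked with the symmetry of $F$ and (\ref{norden}), to pin down every mixed-symmetry component of $F$. Equivalently, the delicate point is to verify that the pair of maps $F\mapsto\theta_s$ and $F\mapsto(\text{purity defect of }\theta)$ is injective on the space of tensors symmetric in the last two arguments, so that skew-symmetry and pureness are jointly \emph{sufficient}, and not merely necessary, for $F\equiv 0$. I expect the bookkeeping in converting purity of the summed tensor into the per-slot relations on $F$ to be where the care is needed.
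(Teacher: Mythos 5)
Your route is the paper's route: the paper also funnels everything through $\alpha(X,Y,Z)=\ipa{(\nabla_X J)Y}{Z}$ (your $F$), your bridge identity $\delta(X,Y,Z)+\delta(X,Z,Y)=-F(X,Y,Z)$ is exactly the computational content of its two displayed expansions, your observation that skew-symmetry of $\theta$ kills the totally symmetric part of $F$ is precisely the paper's cyclic identity (\ref{prooftheta3}), and your forward direction agrees with the discussion preceding the theorem (including the fact, which you correctly note, that the Koszul formula makes (\ref{thetadefcorchete}) valid for any left invariant almost anti-Hermitian structure). So the question is whether your converse closes, and there it has a genuine gap, which you yourself flag as the main obstacle but then mispredict. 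Purity of $\theta$ does \emph{not} unpack into two-slot relations of the type $F(JX,Y,Z)=\pm F(X,JY,Z)$; pushing $\theta(X,JY,Z)=\theta(JX,Y,Z)$ through the bridge produces only the four-term identity
\begin{equation*}
F(JX,Y,Z)-F(X,JY,Z)-F(JZ,X,Y)+F(Z,JX,Y)=0,
\end{equation*}
and no justification is offered for collapsing it to a two-term $J$-transfer (which would amount to per-slot purity of $F$ itself, something the hypothesis does not hand you). Consequently the proposed endgame --- ``mimic Proposition \ref{implicaJparalelo} using that $S_3$ is generated by two transpositions'' --- does not apply: a four-term mixed identity is not a transposition-type symmetry that can be composed in that way.

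The step is fillable, but by a different mechanism, and your ``injectivity'' reformulation omits the ingredient that makes it work: since $(\nabla_X J)$ anticommutes with $J$ and $J$ is $g$-symmetric, $F$ also satisfies $F(X,Y,JZ)=-F(X,JY,Z)$ (the paper's (\ref{prooftheta2})), a constraint not visible in ``tensors symmetric in the last two arguments.'' The paper concludes by a short linear elimination rather than a symmetry-generation argument: add $F(Y,Z,JX)$ to both sides of the four-term identity and apply the cyclic identity at the triple $(JX,Y,Z)$ to get $F(Y,Z,JX)=-F(X,JY,Z)-F(JZ,X,Y)$; rewrite both sides using $F(X,Y,JZ)=-F(X,JY,Z)$, then eliminate $F(X,Y,JZ)$ with the cyclic identity once more, which leaves $2F(JZ,X,Y)=0$, hence $F\equiv 0$. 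So your scaffolding (the bridge identity, the symmetric/alternating decomposition, the reduction to $F$) matches the paper, but as written the converse rests on a false intermediate claim and an argument pattern that would not go through; the missing two lines are exactly the paper's final manipulation.
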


\begin{proof}
  As we need to show that $\nabla J \equiv 0$, let us consider $\alpha(X,Y,Z) = \ipa{(\nabla_{X}J)Y}{Z}$ for all $X,Y,Z$ in $\mathfrak{g}$
  and we will show that $\alpha$ vanishes identically on $\mathfrak{g}$.

  From Lemma \ref{nablaJsimetrico}, we have that $\alpha$ is symmetric in the last two arguments:
  \begin{eqnarray*}
    \alpha(X,Y,Z) & = & \alpha(X,Z,Y),\, \forall X,Y,Z \in \mathfrak{g}.
  \end{eqnarray*}
  We also have the following equality:
  \begin{eqnarray}\label{prooftheta2}
    \alpha(X,Y,JZ) & = & -\alpha(X,JY,Z),\, \forall X,Y,Z \in \mathfrak{g}.
  \end{eqnarray}
From (\ref{proofthetaimplicaanti1}), it follows $\theta(X,Y,Z) + \theta(X,Z,Y) = 0$ for all $X,Y,Z$ in $\mathfrak{g}$.
  Now $\theta(X,Z,Y)$ is also equal to
  \begin{eqnarray*}
\ipa{Z}{-\nabla_{JX}Y-\nabla_{X}JY} + \ipa{Y}{-\nabla_{JZ}X-\nabla_{Z}JX} + \ipa{X}{-\nabla_{JY}Z-\nabla_{Y}JZ},
  \end{eqnarray*}
  hence
  \begin{eqnarray*}
    0 &=& \theta(X,Y,Z) + \theta(X,Z,Y)\\
      &=&  \ipa{\nabla_{JX}Y+J\nabla_{X}Y}{Z} + \ipa{\nabla_{JY}Z+J\nabla_{Y}Z}{X} + \ipa{\nabla_{JZ}X+J\nabla_{Z}X}{Y}\\
      & &  -\ipa{\nabla_{JX}Y+\nabla_{X}JY}{Z} - \ipa{\nabla_{JY}Z+\nabla_{Y}JZ}{X} - \ipa{\nabla_{JZ}X+\nabla_{Z}JX}{Y}\\
      &=&  -\ipa{\nabla_{X}JY -J\nabla_{X}Y }{Z}-\ipa{\nabla_{Y}JZ -J\nabla_{Y}Z }{X}-\ipa{\nabla_{Z}JX -J\nabla_{Z}X }{Y}\\
      &=&  -\alpha(X,Y,Z)-\alpha(Y,Z,X)-\alpha(Z,X,Y).
  \end{eqnarray*}
That is,  for all $X,Y,Z$ in $\mathfrak{g}$
\begin{eqnarray}\label{prooftheta3}
\alpha(X,Y,Z)+\alpha(Y,Z,X)+\alpha(Z,X,Y) &=& 0.
\end{eqnarray}
From (\ref{proofthetaimplicaanti2}), we have $\theta(X,JY,Z)- \theta(JX,Y,Z) = 0$.
Since $\theta(JX,Y,Z)$ is also equal to
\begin{eqnarray*}
  \ipa{-\nabla_{X}Y+J\nabla_{JX}Y}{Z}+\ipa{J\nabla_{JY}Z-\nabla_{Y}Z}{X}+\ipa{\nabla_{JZ}JX+J\nabla_{Z}JX}{Y},
\end{eqnarray*}
it follows
\begin{eqnarray*}
  0 &=& \theta(X,JY,Z)- \theta(JX,Y,Z) \\
    &=& \ipa{\nabla_{JX}JY+J\nabla_{X}JY}{Z}+\ipa{-\nabla_{Y}Z+J\nabla_{JY}Z}{X}+\ipa{J\nabla_{JZ}X-\nabla_{Z}X}{Y}\\
    & & +\ipa{\nabla_{X}Y-J\nabla_{JX}Y}{Z}+\ipa{-J\nabla_{JY}Z+\nabla_{Y}Z}{X}-\ipa{\nabla_{JZ}JX+J\nabla_{Z}JX}{Y}\\
    &=& \ipa{\nabla_{JX}JY -J\nabla_{JX}Y}{Z} + \ipa{\nabla_{X}Y+J\nabla_{X}JY}{Z}\\
    & & +\ipa{J\nabla_{JZ}X -\nabla_{JZ}JX }{Y} - \ipa{\nabla_{Z}X+J\nabla_{Z}JX}{Y}\\
    &=& \alpha(JX,Y,Z)-\alpha(X,JY,Z)-\alpha(JZ,X,Y)+\alpha(Z,JX,Y).
\end{eqnarray*}
Or even better, by adding $\alpha(Y,Z,JX)$ to both sides of the preceding identity and by using equality (\ref{prooftheta3}), we have
\begin{eqnarray*}
  \alpha(Y,Z,JX)  &=& -\alpha(X,JY,Z)-\alpha(JZ,X,Y).
\end{eqnarray*}
Now, by (\ref{prooftheta2}) we have that the above equality can be written as
\begin{eqnarray*}
  -\alpha(Y,JZ,X) & = & \alpha(X,Y,JZ)-\alpha(JZ,X,Y),
\end{eqnarray*}
and since, by (\ref{prooftheta3}),    $\alpha(X,Y,JZ)=-\alpha(Y,JZ,X)-\alpha(JZ,X,Y)$, finally we have that
\begin{eqnarray*}
	-\alpha(Y,JZ,X) & = & -\alpha(Y,JZ,X)-\alpha(JZ,X,Y)-\alpha(JZ,X,Y)
\end{eqnarray*}
So,  $\alpha(JZ,X,Y) = 0$ for all $X,Y,Z$ in $\mathfrak{g}$, which establishes the proof.
\end{proof}

\begin{corollary}
Let $(g,J)$ be a left invariant almost anti-Hermitian structure on a Lie group $G$
  such that the associated $3$-tensor $\theta$ vanishes identically on $\mathfrak{g}$.
Then $(G,g,J)$ is an anti-K\"{a}hler manifold.
\end{corollary}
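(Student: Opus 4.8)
The plan is to read this off directly from the preceding theorem, which characterizes the anti-K\"ahler condition of $(G,g,J)$ by the skew-symmetry and purity of the associated $3$-tensor $\theta$, i.e.\ by the two identities (\ref{proofthetaimplicaanti1}) and (\ref{proofthetaimplicaanti2}). First I would simply observe that if $\theta \equiv 0$ on $\mathfrak{g}$, then both of those identities hold trivially: (\ref{proofthetaimplicaanti1}) reduces to $0 = -0$ and (\ref{proofthetaimplicaanti2}) reduces to $0 = 0$, and each is valid for all $X,Y,Z \in \mathfrak{g}$. In other words, the identically vanishing tensor is automatically skew-symmetric and pure, being the trivial instance of a tensor with these two properties.

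Having checked that the hypotheses of the ``if'' direction of the theorem are met, I would invoke the theorem as a black box in the direction that skew-symmetry and purity of $\theta$ imply $\nabla J \equiv 0$, and conclude at once that $(G,g,J)$ is an anti-K\"ahler manifold. There is no genuine obstacle in this argument: the entire analytic content sits in the theorem, and the corollary is just the remark that $\theta \equiv 0$ is the simplest hypothesis making the theorem applicable. If one preferred a self-contained verification instead of citing the theorem, one could equally well trace through its proof and note that when $\theta$ vanishes the relations (\ref{prooftheta3}) and the subsequent manipulations force the auxiliary tensor $\alpha(X,Y,Z) = \ipa{(\nabla_X J)Y}{Z}$ to vanish identically; but appealing to the theorem directly is the cleanest and shortest route, so that is the approach I would take.
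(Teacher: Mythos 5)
Your proposal is correct and matches the paper's (implicit) argument exactly: the paper states this corollary without proof precisely because, as you observe, the identically vanishing tensor trivially satisfies the skew-symmetry and purity conditions (\ref{proofthetaimplicaanti1}) and (\ref{proofthetaimplicaanti2}), so the preceding theorem applies directly. Nothing more is needed.
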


Let us mention an important consequence of the theorem, which is the key to obtain the main result
of the following section:

\begin{corollary}\label{thetadim4}
  Let $(g,J)$ be a left invariant almost anti-Hermitian structure on a $4$-dimensional Lie group $G$.
  Then, $(G,g,J)$ is an anti-K\"{a}hler manifold if and only if the associated $3$-tensor $\theta$ as defined
  in (\ref{thetadef}) vanishes identically on $\mathfrak{g}$.
\end{corollary}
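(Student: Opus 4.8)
The plan is to read the corollary off from the preceding theorem together with a dimension count, so that the only real content is the vanishing of skew-symmetric pure $3$-tensors in the lowest even dimension. The \emph{if} direction requires no new work: it is precisely the preceding corollary, which holds in all dimensions, so $\theta \equiv 0$ already forces $(G,g,J)$ to be anti-K\"ahler.

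For the \emph{only if} direction, I would assume $(G,g,J)$ is anti-K\"ahler. By the preceding theorem, $\theta$ is then skew-symmetric and pure, i.e.\ it satisfies (\ref{proofthetaimplicaanti1}) and (\ref{proofthetaimplicaanti2}). I would now invoke the complex $3$-tensor $\widehat{\theta}(X,Y,Z) = \theta(X,Y,Z) - \sqrt{-1}\,\theta(JX,Y,Z)$ introduced before the theorem. The point is that, exactly because $\theta$ is skew-symmetric and pure, $\widehat{\theta}$ is a genuine $\mathbb{C}$-multilinear alternating $3$-tensor on the complex vector space $(\mathfrak{g},\mathbb{C})$: $\mathbb{C}$-linearity in the first slot is automatic from $J^2 = -\operatorname{Id}$, while in the remaining slots it is the purity (\ref{proofthetaimplicaanti2}) that lets one transport $J$ across the arguments (so that, for instance, $\theta(JX,JY,Z) = -\theta(X,Y,Z)$), and the complex skew-symmetry follows by feeding the real skew-symmetry (\ref{proofthetaimplicaanti1}) through the same purity relation.

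Since $\dim_{\mathbb{C}}(\mathfrak{g},\mathbb{C}) = n = 2$ when $\dim G = 4$, and an alternating $3$-tensor on a vector space of dimension $2 < 3$ is necessarily zero, I would conclude $\widehat{\theta} \equiv 0$; taking real parts then gives $\theta \equiv 0$. The only step demanding care is the verification that $\widehat{\theta}$ really is $\mathbb{C}$-trilinear and alternating, but these are formal manipulations of the same kind as those already displayed before the theorem, so I expect no essential obstacle there; the substance of the argument is simply the elementary fact that there are no nonzero alternating $3$-forms on a plane.
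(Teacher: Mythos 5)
Your proof is correct and takes essentially the same route as the paper, which obtains this corollary directly from the preceding theorem together with its remark that $\theta$ is the real part of the complex skew-symmetric $3$-tensor $\widehat{\theta}$ on $(\mathfrak{g},\mathbb{C})$, necessarily zero since $\dim_{\mathbb{C}}(\mathfrak{g},\mathbb{C})=2<3$, the converse being the corollary immediately before. Your explicit check that purity (\ref{proofthetaimplicaanti2}) and skew-symmetry (\ref{proofthetaimplicaanti1}) (plus the cyclic invariance built into the definition (\ref{thetadef})) make $\widehat{\theta}$ genuinely $\mathbb{C}$-trilinear and alternating is exactly the unwritten content of that remark.
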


\section{Left invariant anti-K\"{a}hler structures on four dimensional Lie groups }

\begin{theorem}\label{maintheorem}
  Let $G$ be a $4$-dimensional Lie group. Then, $G$ admits a left invariant anti-K\"{a}hler
  structure if and only if its Lie algebra $\mathfrak{g}=\operatorname{Lie}(G)$ is
  abelian or is isomorphic to
  \begin{eqnarray*}
  \mathfrak{r}_{-1,-1}&=&\left\{[{e_1},{e_2}]={e_2},[{e_1},{e_3}]=-{e_3},[{e_1},{e_4}]=-{e_4},\right. \\
   & \text{or} & \\
  \mathfrak{aff}(\mathbb{C})_{\mathbb{R}} &=& \left\{[{e_1},{e_3}]={e_3},[{e_1},{e_4}]={e_4},[{e_2},{e_3}]={e_4},[{e_2},{e_4}]=-{e_3}.\right.
  \end{eqnarray*}
\end{theorem}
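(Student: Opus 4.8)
The plan is to reduce the whole statement to the single algebraic condition $\theta \equiv 0$ by means of Corollary \ref{thetadim4}, which asserts that in dimension $4$ a left invariant almost anti-Hermitian structure $(g,J)$ is anti-Kähler precisely when the $3$-tensor $\theta$ of (\ref{thetadefcorchete}) vanishes on $\mathfrak{g}$. The ``if'' direction is then the easy half: for the abelian algebra every bracket vanishes, so $\theta\equiv 0$ holds trivially; for $\mathfrak{aff}(\mathbb{C})_{\mathbb{R}}$ one uses its bi-invariant complex structure together with Proposition \ref{JbiinvariantAntiKahler}; and for $\mathfrak{r}_{-1,-1}$ I would exhibit one explicit complex structure $J$ and one signature-$(2,2)$ Norden metric and verify directly from (\ref{thetadefcorchete}) that $\theta\equiv 0$.

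For the ``only if'' direction I would fix an orthonormal $J$-basis $\{e_1,\,e_2=Je_1,\,e_3,\,e_4=Je_3\}$ as in Remark \ref{baseOrtonormal}, so that $\langle e_1,e_1\rangle=\langle e_3,e_3\rangle=1$, $\langle e_2,e_2\rangle=\langle e_4,e_4\rangle=-1$, and the remaining inner products vanish. Writing $[e_i,e_j]=\sum_k c_{ij}^k e_k$, the requirement that $\theta$ vanish on every triple of basis vectors becomes an explicit system of polynomial equations in the structure constants, to be combined with the Jacobi identity. I would trim the number of free parameters by exploiting the residual freedom of changing the $J$-basis by the structure group $\mathrm{O}(2,\mathbb{C})\cong\mathrm{O}(n,n)\cap\mathrm{GL}(n,\mathbb{C})$ of Remark \ref{igualdadgrupos}, which preserves both $\langle\cdot,\cdot\rangle$ and $J$ and hence the shape of the system.

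To keep the case analysis under control I would split according to the nature of $J$. If $J$ is bi-invariant, then $(G,J)$ is a complex Lie group of complex dimension $2$, so $\mathfrak{g}$ is the realification of one of the two $2$-dimensional complex Lie algebras ($\mathbb{C}^2$ or $\mathfrak{aff}(\mathbb{C})$), yielding the abelian algebra or $\mathfrak{aff}(\mathbb{C})_{\mathbb{R}}$. If $J$ is abelian, then Theorem \ref{abelianflat} forces $(G,g)$ to be flat and $\mathfrak{g}$ to be unimodular; a short check of the $4$-dimensional unimodular Lie algebras carrying an abelian complex structure then leaves only the abelian one. The remaining and most delicate case is that of a $J$ which is neither abelian nor bi-invariant: here I would solve the reduced $\theta\equiv 0$ plus Jacobi system directly and show that, after the normalization above, the only non-abelian solution is isomorphic to $\mathfrak{r}_{-1,-1}$.

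Finally I would separate the resulting isomorphism classes using invariants: $\mathfrak{r}_{-1,-1}$ is not unimodular (whence its $J$ can be neither abelian, by the previous paragraph, nor bi-invariant) and its derived ideal is $3$-dimensional, whereas that of $\mathfrak{aff}(\mathbb{C})_{\mathbb{R}}$ is $2$-dimensional, so the two non-abelian algebras are genuinely distinct and no further algebra slips through. A cross-check against the known list of $4$-dimensional real Lie algebras admitting a complex structure then confirms completeness. The main obstacle is precisely this ``neither'' case: organizing the polynomial system so that no solution branch is lost, and then certifying --- rather than merely observing --- that every admissible structure is isomorphic to one of the three listed algebras.
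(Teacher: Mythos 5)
Your reduction agrees with the paper's at the foundational level: both arguments funnel everything through Corollary \ref{thetadim4} and an orthonormal $J$-basis from Remark \ref{baseOrtonormal}, and your use of the bracket formula (\ref{thetadefcorchete}) as the polynomial system is legitimate, since that expression follows from torsion-freeness and metric compatibility alone, without the anti-K\"ahler hypothesis. Where you genuinely diverge is in organizing the ``only if'' direction: you stratify by the type of $J$ (bi-invariant, abelian, neither), while the paper never looks at the type of $J$ at all --- it encodes the Jacobi identities as the linear relations (\ref{paralell}) among $[X,JX]$, $[Y,JY]$, $\Delta(X,Y)$, $\Delta(X,JY)$ and splits on the $4\times 4$ coefficient matrix $A$ in $a,b,c,d$: linear independence of those four vectors gives the abelian algebra, $A\neq 0$ gives the family (\ref{caso1a}) isomorphic to $\mathfrak{r}_{-1,-1}$, and $A=0$ gives the family (\ref{caso2}), on which $J$ turns out to be bi-invariant, isomorphic to $\mathfrak{aff}(\mathbb{C})_{\mathbb{R}}$. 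Your bi-invariant branch is actually cleaner than the paper's Case 2, since classifying $2$-dimensional complex Lie algebras replaces an explicit isomorphism matrix; and your ``if'' direction is sound in outline, though for $\mathfrak{r}_{-1,-1}$ you still owe the explicit pair $(g,J)$, which the paper gets for free because its Case 1 produces the anti-K\"ahler data (\ref{caso1a}) together with the isomorphism $\phi$.

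There are, however, two genuine gaps. First, your ``neither'' case is not a difficulty you have reduced; it is the entire computational content of the theorem. In the paper it is precisely the Case 1 analysis: rank of $A$ equal to three, the four kernel vectors $v_1,\dots,v_4$ forced to be parallel, whence $t_3(1+\beta^2)=0$, so $t_3=0$, then $t_1=t_5=t_7=0$ and $\varepsilon^2=1$, and finally the explicit isomorphism onto $\mathfrak{r}_{-1,-1}$. Announcing that you will ``solve the reduced system directly'' leaves the classification unproved at its core. Second, your abelian-$J$ branch does not close as stated: flatness (Theorem \ref{abelianflat}) plus unimodularity leave, besides $\mathbb{R}^4$, the algebra $\mathbb{R}\times\mathfrak{h}_3$ (note also that you are silently importing the external classification of abelian complex structures in dimension $4$ here). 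That algebra is not excluded by the invariants you invoke: it is unimodular, it carries flat left-invariant pseudo-Riemannian metrics, and its standard abelian structure ($JX_1=X_2$, $JX_3=X_4$, with $[X_1,X_2]=X_3$ and $X_3,X_4$ central) even satisfies the cubic identity $[J[X,Y],Z]=J[[X,Y],Z]$, because all brackets land in the $J$-stable center. To eliminate it you must actually verify that the canonical connection $\nabla_XY=\tfrac{1}{2}\left([X,Y]-J[X,JY]\right)$ of Proposition \ref{levicivitaabeliana} is metric for no Norden metric --- that is, run exactly the kind of computation your case split was designed to avoid. Relatedly, the closing ``cross-check against the known list'' of complex structures is a plausibility check, not an argument; completeness must come from exhaustively solving the $\theta\equiv 0$ plus Jacobi system, as the paper does.
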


\begin{proof}
Let $G$ be a real Lie group of dimension four which admits a left invariant  anti-K\"ahler structure $(g,J)$.
As before, let us denote by $\ip$ the inner product on $\mathfrak{g}$ induced by the left invariant metric $g$
on $G$.

From Remark \ref{baseOrtonormal} there exists an orthonormal basis of the Lie algebra $\mathfrak{g}$ of $G$ of the form $\mathcal{B}=\{X,JX,Y, JY\}$,
 where $X$ and $Y$ are spacelike, and so, we have
$$
\left[ g\right] _{\mathcal{B}}=\text{diag}\,(1,-1,1,-1),
$$
and
$$
\left[ J\right] _{\mathcal{B}}=\text{diag}\,(j,j),
$$
where $j= \left[ \begin{array}{c c}
0 & -1\\
1 & 0
\end{array}\right] $.

From Corollary \ref{thetadim4} we have that the $3$-form $\theta$ vanishes on $\mathfrak{g}$. In particular, we have
$\theta(U,V,JV)=0$ for all $U,V$ in $\mathfrak{g}$; equivalently
\begin{eqnarray*}
\ipa{[V,U]}{V}&=&-\ipa{[V,JU]}{JV}.
\end{eqnarray*}
So the following equalities for the elements of $\mathcal{B}$ hold:
\begin{equation}\label{theta}
\begin{array}{lcllcl}
\ipa{[X,Y]}{X}&=&-\ipa{[X,JY]}{JX},   &   \ipa{[X,Y]}{JX}&=&\ipa{[X,JY]}{X}, \\
\ipa{[X,Y]}{Y}&=&-\ipa{[JX,Y]}{JY},    &  \ipa{[X,Y]}{JY}&=&\ipa{[JX,Y]}{Y},\\
\ipa{[X,JY]}{Y}&=&-\ipa{[JX,JY]}{JY},  &  \ipa{[X,JY]}{JY}&=&\ipa{[JX,JY]}{Y},\\
\ipa{[JX,Y]}{X}&=&-\ipa{[JX,JY]}{JX}, &   \ipa{[JX,Y]}{JX}&=&\ipa{[JX,JY]}{X}, \\
\ipa{[X,JX]}{X}&=&0,              &   \ipa{[X,JX]}{JX}&=&0,\\
\ipa{[Y,JY]}{Y}&=&0,              &   \ipa{[Y,JY]}{JY}&=&0.
\end{array}
\end{equation}
Hence, by (\ref{theta}) the Lie bracket of elements of $\mathcal{B}$ satisfies the following equations
\begin{equation}\label{coef}
\begin{array}{rcl}
\left[ X, JX\right] &=& a\,Y + b\,JY,\\
\left[ X, Y\right] &=& t_{1}\,X + t_{2}\,JX + t_{3}\,Y + t_{4}\,JY ,\\
\left[ X, JY\right] &=& -t_{2}\,X + t_{1}\,JX + t_{5}\,Y + t_{6}\,JY ,\\
\left[ JX, Y\right] &=& t_{7}\,X + t_{8}\,JX - t_{4}\,Y + t_{3}\,JY ,\\
\left[ JX, JY\right] &=& -t_{8}\,X + t_{7}\,JX - t_{6}\,Y + t_{5}\,JY,\\
\left[ Y, JY\right] &=& c\,X + d\,JX,
\end{array}
\end{equation}
where $a, b, c, d$ and $t_{i}$, for $i=1, \cdots, 8$,  are real numbers.

Besides, by using again the 3-form $\theta$, since $\theta(U,U,V)=0$ for all $U,V \in \mathfrak{g}$, we obtain
\begin{equation}\label{abcd}
\begin{array}{ccccccc}
a&=&t_{2}+t_{7},& &b&=&t_{8}-t_{1},\\
c&=&-(t_{4}+t_{5}),& & d&=&t_{3}-t_{6}.
\end{array}
\end{equation}

Now, computing all the Jacobi equations involving the elements of $\mathcal{B}$ and considering the equalities given in (\ref{coef}) we obtain the following equations
\begin{equation}\label{paralell}
\begin{array}{rcl}
- (t_{8}+t_{1}) \left[ X, JX\right] - b \left[ Y, JY\right] + t_{3}\Delta(X,Y) + t_{4}\Delta(X,JY)&=&0, \\
(t_{2}-t_{7}) \left[ X, JX\right] + a \left[ Y, JY\right] + t_{5}\Delta(X,Y) + t_{6}\Delta(X,JY)&=&0, \\
- d \left[ X, JX\right] + (t_{3}+t_{6}) \left[ Y, JY\right] - t_{1}\Delta(X,Y) + t_{2}\Delta(X,JY)&=&0, \\
- c \left[ X, JX\right] + (t_{4}-t_{5})\left[ Y, JY\right] + t_{7}\Delta(X,Y) - t_{8}\Delta(X,JY)&=&0,
\end{array}
\end{equation}
where $\Delta(U,V)=\left[ JU, V\right]-\left[ U, JV\right]$, for all $U,V \in \mathfrak{g}$.

If $ \{ \left[ X, JX\right],  \left[ Y, JY\right],\Delta(X,Y), \Delta(X,JY) \} $ is a linearly independent set, then we have that all the constants are zero and so $\mathfrak{g}$ is the four dimensional abelian Lie algebra.

From here on, we assume that this set is linearly dependent. By (\ref{coef}) and (\ref{abcd}), we see that
\begin{equation}\label{deltacoef}
\begin{array}{ccc}
\Delta(X,Y)&=& a X + bJX + cY +d JY, \\
\Delta(X,JY)&=& -b X + a JX + dY -c JY.
\end{array}
\end{equation}
Next, we study the set of the 4-tuples $(\lambda_{1}, \lambda_{2},\lambda_{3}, \lambda_{4})$ satisfying
$$
\lambda_{1}[X,JX] + \lambda_{2}[Y,JY] + \lambda_{3}\Delta(X,Y) + \lambda_{4}\Delta(X,JY)=0.
$$
In terms of $\mathcal{B}$, using (\ref{coef}) and (\ref{deltacoef}), this set coincides with the set of solutions of the homogeneous linear system $Ax=0$, where
$$
A=\left[\begin{array}{cccc}
0 & c &a &-b \\
0 & d &b &a \\
a & 0 &c &d \\
b & 0 &d &-c  \end{array} \right] .
$$
We have two cases to analize:
\smallskip

\noindent \textbf{Case 1.}
If at least one of the coefficients of $A$ is non zero, we have that the Lie algebra is
\begin{eqnarray}\label{caso1a}
\mu_{a,b,\varepsilon}&=&
\left\{
\begin{array}{lcl}
\left[ X, JX\right] &=& a\,Y + b\,JY, \\
\left[ X, Y\right] &=& a\,JX -\varepsilon b\,JY,\\
\left[ X, JY\right] &=& -a\,X + \varepsilon a\,JY ,\\
\left[ JX, Y\right] &=& b\,JX + \varepsilon b\,Y ,\\
\left[ JX, JY\right] &=& -b\,X - \varepsilon a\,Y,\\
\left[ Y, JY\right] &=& \varepsilon b\,X -\varepsilon a\,JX
\end{array}
\right.
\end{eqnarray}
with $\varepsilon^2 =1 $ and $a,b \in \mathbb{R}$ ($a,b\neq0$).

To prove this assertion, first we call
$$
v_1=\left[\begin{array}{c}
-t_{1}-t_{8}\\
t_{1} - t_{8}\\
t_{3}\\
t_{4}\end{array} \right], \,
v_2=\left[ \begin{array}{c}
t_{2}-t_{7}\\
t_{2} + t_{7}\\
t_{5}\\
t_{6}\end{array} \right], \,
v_3=\left[ \begin{array}{c}
t_{6}-t_{3}\\
t_{3} + t_{6}\\
-t_{1}\\
t_{2}\end{array} \right] \,\, \text{and} \,\,
v_4=\left[ \begin{array}{c}
t_{4}+t_{5}\\
t_{4} - t_{5}\\
t_{7}\\
-t_{8}\end{array} \right].
$$
By the assumption that at least one of the coefficients of $A$ is different from zero, the rank of $A$ is three. By (\ref{paralell}), $v_{1},\, v_{2}, \,v_{3}$ and $v_{4}$ are in the kernel of $A$, which has dimension one. So, these vectors are parallel and not simultaneously zero. We will suppose that $v_{1}$ is nonzero (the remaining cases give the same conditions on the coefficients $a, b, c, d$ and $t_{i}$). So, there exist real numbers $\alpha, \, \beta$ and  $\varepsilon$ such that
$$
v_{2}=\alpha v_{1},\,\,v_{3}=\beta v_{1},\, \, v_{4}= \varepsilon v_{1}.
$$
In particular, using the equality $v_{3}=\beta\, v_{1}$, by the equations we have
$$
\begin{cases}
\begin{array}{lcl}
t_{6}-t_{3}&=&\beta\,(-t_{1}-t_{8}), \\
t_{3} + t_{6}&=&\beta\, (t_{1} - t_{8}), \\
-t_{1}&=&\beta \, t_{3}
\end{array}
\end{cases}
$$
and so, we obtain that $t_{3}(1+\beta^2)=0$. Therefore, $t_{3}=0$ and in consequence $t_{1}=t_{5}=t_{7}=0$, since
$-t_{1}=\beta \, t_{3}, \, t_{5}=\alpha \, t_{3} \, \,\text{and}\,\,
t_{7}=\varepsilon \, t_{3}.
$
Now, by (\ref{abcd}) and the above computations we have
\begin{equation*}
\begin{array}{lclclcl}
a&=&t_{2}, & &b&=&t_{8}\\
c&=&-t_{4} & &d&=&-t_{6},
\end{array}
\end{equation*}
and using that $v_{4}=\varepsilon\, v_{1}$ we obtain the following two equalities
$$
\begin{cases}
\begin{array}{ccc}
c&=&\varepsilon b, \\
b&=&\varepsilon c.
\end{array}
\end{cases}
$$
So, $(\varepsilon^2 -1)c=0$. That is, $c=0$ or $\varepsilon^2 = 1$. But, if $c=0$ we have that all the coefficients are zero (since we have the equalities $d=\alpha\, c, \, a=-\beta\, c$ and $b=\varepsilon \, c$), which contradicts the hypothesis. Hence, $\varepsilon^2 = 1$ and this gives that $c = \varepsilon b$ and $d= -\varepsilon a$ (the last one holds since  $a=-\beta b= -\varepsilon \beta c$ and $\beta c=d$).

Finally, in this case we have that the family of Lie algebras  $\mu_{a,b,\varepsilon}$ that was obtained is isomorphic to the Lie algebra $\mathfrak{r}_{-1,-1}$. If we consider the change of basis given by the  matrix
$$
\phi=
 \left[ \begin {array}{cccc}
-\varepsilon a&-\varepsilon b&b&-a\\
\noalign{\medskip}
\varepsilon b&-\varepsilon a&a&b\\
\noalign{\medskip}
0&-\varepsilon&-1&0\\
\noalign{\medskip}
\varepsilon&0&0&-1
\end {array} \right]
$$
whose inverse is given by
$$
\phi^{-1}=\frac{1}{2(a^2+b^2)}
 \left[\begin {array}{cccc}
-\varepsilon a&\varepsilon b&0& \varepsilon\left( {a}^{2}+{b}^{2} \right)\\
\noalign{\medskip}
-\varepsilon b&-\varepsilon a&-\varepsilon \left( {a}^{2}+{b}^{2} \right)&0\\
\noalign{\medskip}
b&a&-({a}^{2}+{b}^{2})&0\\
\noalign{\medskip}
-a&b&0&-({a}^{2}+{b}^{2})
\end{array} \right],
$$
we have that $\phi$ is an isomorphism between the Lie algebra $\mu_{a,b,\varepsilon}$ and the Lie algebra $\mathfrak{r}_{-1,-1}$.

\noindent \textbf{Case 2.}
On the other hand, if all the coefficients of $A$ are zero, we have that $\mathfrak{g}$ is the underlying real Lie algebra
 of the $2$-dimensional complex Lie algebra $\mathfrak{aff}(\mathbb{C})$. Indeed, $a=b=c=d=0$ implies that $t_{5}=-t_{4}$, $t_{6}=t_{3}$, $t_{7}=-t_{2}$ and $t_{8}=t_{1}$, and hence
\begin{equation}\label{caso2}
\mu\,_{t_{1},t_{2},t_{3},t_{4}}=
\begin{cases}
\begin{array}{rcl}
\left[ X, JX\right] &=& 0,\\
\left[ X, Y\right] &=& t_{1}\,X + t_{2}\,JX + t_{3}\,Y + t_{4}\,JY ,\\
\left[ X, JY\right] &=& -t_{2}\,X + t_{1}\,JX - t_{4}\,Y + t_{3}\,JY ,\\
\left[ JX, Y\right] &=& -t_{2}\,X + t_{1}\,JX - t_{4}\,Y + t_{3}\,JY ,\\
\left[ JX, JY\right] &=& -t_{1}\,X - t_{2}\,JX - t_{3}\,Y - t_{4}\,JY,\\
\left[ Y, JY\right] &=& 0.
\end{array}
\end{cases}
\end{equation}

It is easy to check that $ J\left[ U, V\right]=\left[JU, V\right]$, for all $U,V \in \mathcal{B}$, and so, $J$ is a bi-invariant
complex structure the on Lie algebra $\mu\,_{t_{1},t_{2},t_{3},t_{4}}$.

Finally, the change of basis given by the matrix
$$
\phi=
\left[ \begin {array}{cccc}
t_{3}&-t_{4}&-t_{1}&t_{2}\\
\noalign{\medskip}t_{4}&t_{3}&-t_{2}&-t_{1}\\
\noalign{\medskip}-t_{1}&-t_{2}&-t_{3}&-t_{4}\\
\noalign{\medskip}t_{2}&-t_{1}&t_{4}&-t_{3}
\end {array} \right]
$$
whose inverse is
$$
\phi^{-1}=\frac{1}{{t_{1}}^{2}+{t_{2}}^{2}+{t_{3}}^{2}+{t_{4}}^{2}}\,\phi^{t}
$$
gives an isomorphism between $\mu\,_{t_{1},t_{2},t_{3},t_{4}}$ and
$$
\mathfrak{aff}(\mathbb{C})_{\mathbb{R}}=
\left\{[{e_1},{e_3}]={e_3},[{e_1},{e_4}]={e_4},[{e_2},{e_3}]={e_4},[{e_2},{e_4}]=-{e_3}.\right.
$$
\end{proof}

Now, we will  investigate a more delicate problem. We are interested in determining how many left invariant K\"{a}hler
structures  the Lie groups given in the Theorem \ref{maintheorem} have.
Rather than discussing this in full generality, let us assume that the Lie groups are simply connected
and introduce the notion of \textit{equivalence} of anti-K\"{a}hler structures

\begin{definition}
Let $(g_1,J_1)$ and $(g_2,J_2)$ be left invariant anti-K\"{a}hler structures on simply connected
Lie groups $G_1$ and $G_2$, respectively. We say that $(g_1,J_1)$ is equivalent to $(g_2,J_2)$
if there exists an isomorphism $\Psi:G_1 \rightarrow G_2$ such that is an isometry between
the pseudo-Riemannian manifolds $(G_1,g_1)$ and $(G_2,g_2)$, and
$(\operatorname{d}\Psi)\circ J_1 = J_2 \circ(\operatorname{d}\Psi)$.
\end{definition}

\begin{remark}
  Note that two anti-K\"{a}hler structures $(g_1,J_1)$ and $(g_2,J_2)$ are equivalent
  if and only if there exists a Lie algebra isomorphism $\psi:\mathfrak{g}_1 \rightarrow \mathfrak{g}_2$
  such that $\psi$ is a (linear) isometry between $(\mathfrak{g}_1,\ip_1)$ and $(\mathfrak{g}_2,\ip_2)$,
  and $\psi \circ J_1 = J_2 \circ \psi$. Here, $\ip_i$ is the respective inner product on $\mathfrak{g}_i$ induced by the left invariant
  metric $g_i$.
\end{remark}

\begin{remark}
  The proof of Theorem \ref{maintheorem} gives more information, namely that each Lie algebra in (\ref{caso1a}) and (\ref{caso2}) represents
  a left invariant anti-K\"{a}hler structure on the respective Lie group. Our equivalence
  problem reduces to know when $\mu_{a,b,\varepsilon}$ and $\mu_{c,d,\varepsilon}$ are isomorphic via a linear map $\varphi$ that preserves the inner product $\ip$ and the complex structure given in the beginning of the section ($\varphi \in  O(2,2) \cap GL(2,\mathbb{C})$). The same reasoning for two Lie
  algebras in the family given in (\ref{caso2}).
\end{remark}

\begin{theorem}
  Let $G$ be a simply connected Lie group of dimension $4$ admitting a left invariant anti-K\"{a}hler structure. If the Lie algebra  $\mathfrak{g}$ of $G$ is isomorphic to $\mathfrak{r}_{-1,-1}$, then $G$ admits only one left invariant anti-K\"{a}hler structure, up to equivalence.
  If $\mathfrak{g}$ is isomorphic to $\mathfrak{aff}(\mathbb{C})_{\mathbb{R}}$, then $G$ admits only a two-parameter family of non-equivalent anti-K\"{a}hler structures.
\end{theorem}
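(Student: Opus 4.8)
The plan is to translate the equivalence problem into a question about orbits of the structure group $\mathcal{G}:=\mathrm{O}(2,2)\cap\mathrm{GL}(2,\mathbb{C})\cong\mathrm{O}(2,\mathbb{C})$ and to resolve it through a single algebraic invariant. I fix once and for all the model anti-Hermitian space $(V,\langle\cdot,\cdot\rangle,J)$ in the orthonormal $J$-basis $\{X,JX,Y,JY\}$ of the proof of Theorem \ref{maintheorem}, so that $[g]=\mathrm{diag}(1,-1,1,-1)$ and $[J]=\mathrm{diag}(j,j)$. By the two preceding remarks, a left invariant anti-K\"ahler structure on the given simply connected $G$ is recorded on $V$ by a Lie bracket from the family (\ref{caso1a}) (type $\mathfrak{r}_{-1,-1}$) or (\ref{caso2}) (type $\mathfrak{aff}(\mathbb{C})_{\mathbb{R}}$), and two such structures are equivalent exactly when the corresponding brackets are conjugate under the action $\varphi\cdot\mu=\varphi\circ\mu\circ(\varphi^{-1}\times\varphi^{-1})$ of $\mathcal{G}$. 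So I must count $\mathcal{G}$-orbits in each family, using that $\mathcal{G}\cong\mathrm{O}(2,\mathbb{C})$ has identity component $\mathrm{SO}(2,\mathbb{C})\cong\mathbb{C}^{*}$ acting on the complex plane $(V,J)$ as the complex rotations, together with one reflection; in particular $\mathcal{G}$ has real dimension $2$ and two connected components.

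For $\mathfrak{aff}(\mathbb{C})_{\mathbb{R}}$ the bracket (\ref{caso2}) is that of the complex Lie algebra $\mathfrak{aff}(\mathbb{C})$, since there $J$ is bi-invariant; writing $p=t_{1}+\sqrt{-1}\,t_{2}$ and $q=t_{3}+\sqrt{-1}\,t_{4}$ it reads $[X,Y]=pX+qY$ over $(V,J)$, with $(p,q)\neq(0,0)$. The key step is the invariant coming from the trace form $\tau(Z)=\operatorname{Tr}(\operatorname{ad}_{Z})$: this is a $\mathcal{G}$-equivariant element of the complex dual of $(V,J)$, with metric dual $\tau^{\sharp}=qX-pY$. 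Since $\tau^{\sharp}$ recovers $(p,q)$, hence $\mu$, completely, and since $\mathcal{G}\cong\mathrm{O}(2,\mathbb{C})$ acts transitively on the set of vectors of each fixed value of $\ipda{\tau^{\sharp}}{\tau^{\sharp}}=p^{2}+q^{2}=:s\in\mathbb{C}$ (transitivity on a fixed nonzero norm level, and separately on the nonzero null vectors), the orbit of $\mu$ is governed by the single complex invariant $s$. As every value $s\in\mathbb{C}$ is attained and distinct values give inequivalent structures, the moduli space is a copy of $\mathbb{C}$, i.e.\ a two-parameter family, as claimed.

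For $\mathfrak{r}_{-1,-1}$ I will instead show $\mathcal{G}$ acts transitively on the whole family (\ref{caso1a}), giving a single orbit. First I would record the trace form of $\mu_{a,b,\varepsilon}$, whose metric dual computes to $\tau^{\sharp}=\varepsilon aX-\varepsilon b\,JX-bY-a\,JY$; this still recovers $(a,b,\varepsilon)$ from $\tau^{\sharp}$, so within this family the bracket is again determined by $\tau^{\sharp}$, and one checks $\ipda{\tau^{\sharp}}{\tau^{\sharp}}=0$ identically, which is consistent with (and necessary for) a single orbit. It then remains to exhibit the group elements realizing transitivity: writing $\zeta=a+\sqrt{-1}\,b\in\mathbb{C}^{*}$, a direct computation should show that the connected part $\mathrm{SO}(2,\mathbb{C})\cong\mathbb{C}^{*}$ moves $\zeta$ transitively over $\mathbb{C}^{*}\cong\mathbb{R}^{2}\setminus\{0\}$ for each fixed $\varepsilon$, while the reflection in $\mathcal{G}$ interchanges $\varepsilon=+1$ and $\varepsilon=-1$. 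Combining the two identifies all members of (\ref{caso1a}), yielding a unique structure up to equivalence.

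The main obstacle is the explicit transitivity computation for $\mathfrak{r}_{-1,-1}$: because $J$ is \emph{not} bi-invariant there, the bracket is not $\mathbb{C}$-bilinear, so one cannot reduce to the clean complex-orthogonal normal form used for $\mathfrak{aff}(\mathbb{C})_{\mathbb{R}}$; instead one must track the $\mathcal{G}$-action on the three real parameters directly and verify both that the $\mathrm{SO}(2,\mathbb{C})$-orbit of $(a,b)$ fills $\mathbb{R}^{2}\setminus\{0\}$ and that the reflection accounts for the sign $\varepsilon$. A secondary point needing care is the completeness of $s$ in the $\mathfrak{aff}(\mathbb{C})_{\mathbb{R}}$ case at the null value $s=0$, where the derived line and $\tau^{\sharp}$ degenerate and one relies on transitivity of $\mathrm{O}(2,\mathbb{C})$ on the nonzero null vectors.
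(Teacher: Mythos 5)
Your proposal is correct in substance but takes a genuinely more invariant-theoretic route than the paper. For the $\mathfrak{aff}(\mathbb{C})_{\mathbb{R}}$ case the paper works with the complexified brackets $\mu_{z_1,z_2}$ directly, derives the action formula $\varphi\cdot\mu_{z_1,z_2}=\mu_{\epsilon w_1,\epsilon w_2}$ with $\epsilon=\det\varphi$, reads off $z_1^2+z_2^2$ as the invariant, and proves completeness by writing down explicit elements of $O(V,\ipd)$ (mapping orthogonal bases to orthogonal bases in the anisotropic case, and handling the null lines $z(1,\pm\sqrt{-1})$ by hand). Your trace-form argument packages the same content more conceptually: $\tau^\sharp=qX-pY$ is an equivariant complete parametrization of the family, so orbit-counting reduces to the $O(2,\mathbb{C})$-orbits of nonzero vectors in $\mathbb{C}^2$, which Witt's theorem classifies by $\ipda{\tau^\sharp}{\tau^\sharp}=p^2+q^2$ --- exactly the paper's invariant (note the sign $\epsilon$ is harmless both because it squares away and because $-\mathrm{Id}\in SO(2,\mathbb{C})$). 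What your route buys is an explanation of \emph{why} the invariant is complete; what the paper's buys is self-containedness, with no appeal to Witt extension.

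For $\mathfrak{r}_{-1,-1}$ the paper simply exhibits one explicit $\varphi\in\mathcal{G}$ conjugating the reference $\mu_{1,0,+1}$ to each $\mu_{a,b,\varepsilon}$, whereas you defer the transitivity computation and flag it as the main obstacle, worrying that the non-bi-invariance of $J$ blocks the complex-orthogonal reduction. In fact your own machinery closes this step without any new computation: writing $\zeta=a+\sqrt{-1}\,b$, your formula for the trace dual gives $\tau^\sharp=\bar{\zeta}\,(\varepsilon X-\sqrt{-1}\,Y)$ in the complex structure of $(V,J)$, so the two $\varepsilon$-slices of the family are carried bijectively and equivariantly onto the two punctured null lines $\mathbb{C}^{*}(1,\mp\sqrt{-1})$ of $\ipd$ --- and transitivity of $O(2,\mathbb{C})$ on nonzero null vectors is precisely the fact you already invoke in your case (b), with $SO(2,\mathbb{C})\cong\mathbb{C}^*$ scaling each line and a reflection swapping them, confirming your predicted $\varepsilon$-exchange. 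The one point you should make explicit, in both cases, is the bridge from vector orbits back to bracket orbits: you need each family to be stable under the $\mathcal{G}$-action, so that $\varphi\tau^\sharp_{\mu}=\tau^\sharp_{\mu'}$ together with injectivity of $\tau^\sharp$ on the family forces $\varphi\cdot\mu=\mu'$. This is easy but not automatic: the action preserves the anti-K\"ahler condition (e.g.\ via Corollary \ref{thetadim4}) and the isomorphism type of the algebra, and the proof of Theorem \ref{maintheorem} shows every anti-K\"ahler bracket on the standard $(V,\ip,J)$ lies in (\ref{caso1a}) or (\ref{caso2}), so each family is indeed $\mathcal{G}$-stable. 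With that sentence added and the null-line identification above replacing your ``direct computation should show,'' your argument is a complete and somewhat cleaner proof of the theorem.
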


\begin{proof} $ $\newline

\noindent \textbf{Case 1.}
First, we fix the structure $\mu_{1,0,+1}$ obtained in (\ref{caso1a}). We have that $\mu_{1,0,+1}$ is equivalent to $\mu_{a,b,\varepsilon}$, for all $a,b\in \mathbb{R}$ ($a,b\neq0$) and $\varepsilon=\pm 1$. Indeed, we take the change of basis given by the matrix
\begin{eqnarray*}
\varphi &:=&
\frac{1}{2(a^2+b^2)}
\left[
\begin {array}{cccc}
 \varepsilon r a &\varepsilon s b & \varepsilon r b& -\varepsilon s a\\
-\varepsilon s b &\varepsilon r a & \varepsilon s a&  \varepsilon r b\\
            -r b &            s a &             r a&              s b\\
            -s a &           -r b &            -s b&              r a
\end {array} \right]
\end{eqnarray*}
whose inverse is
\begin{eqnarray*}
\varphi^{-1} &=&
\frac{1}{2(a^2+b^2)}
\left[
\begin {array}{cccc}
 \varepsilon r a & \varepsilon s b &- r b  &  s a\\
-\varepsilon s b & \varepsilon r a &- s a  &- r b\\
 \varepsilon r b &-\varepsilon s a &  r a  &  s b\\
 \varepsilon s a & \varepsilon r b & -s b  &  r a
\end {array} \right]
\end{eqnarray*}
where $r=a^2+b^2+1$ and $s=a^2+b^2-1$.

A straightforward computation shows that $\varphi$ preserves the inner product $\ip$
 and the complex structure $J$, and that $\varphi$  is an isomorphism between $\mu_{1,0,+1}$ and $\mu_{a,b,\varepsilon}$.

\medskip

\noindent \textbf{Case 2.}
We recall that, by Remark \ref{baseOrtonormal}, if we have an anti-Hermitian vector space  $(V, \ip, J)$ of real dimension $4$ we have an associated $2$-dimensional complex vector space  $(V,\mathbb{C})$ and a $\mathbb{C}$-symmetric inner product $\ipd$. And conversely, given a complex vector space endowed with a $\mathbb{C}$-symmetric inner product we have its associated  anti-Hermitian vector space.

Now, we consider the structure of real Lie algebra $\mu_{t_{1},t_{2},t_{3},t_{4}}$ defined in (\ref{caso2}) on the anti-Hermitian vector space
$(V, \ip, J)$, with $V=\operatorname{Span}_{\mathbb{R}}\{X,JX,Y,JY\}$. Since $J$ is a bi-invariant complex structure,
$\mu_{t_{1},t_{2},t_{3},t_{4}}$ induces a complex Lie bracket $\mu_{z_{1},z_{2}}$ on the $2$-dimensional complex vector space $(V,\mathbb{C})$
given by:
\begin{eqnarray*}
\mu_{z_{1},z_{2}}(X,Y)&=&z_{1}X + z_{2}Y,
\end{eqnarray*}
where $z_{1}=t_{1}+ \sqrt{-1} t_{2}$ and $z_{2}=t_{3}+ \sqrt{-1} t_{4}$. Conversely, if $(V,\mathbb{C})$ is endowed with a Lie bracket $\mu_{z_{1},z_{2}}$, where $z_{1},z_{2}$ are complex numbers ($z_1,z_2 \neq 0$), we have a member of  (\ref{caso2}), $\mu\,_{t_{1},t_{2},t_{3},t_{4}}$, where $t_{1}=\mathfrak{Re}(z_{1})$, $t_{2}=\mathfrak{Im}(z_{1})$, $t_{3}=\mathfrak{Re}(z_{2})$ and $t_{4}=\mathfrak{Im}(z_{2})$.

From Remark \ref{igualdadgrupos}, we have two Lie algebra structures $\mu_{t_{1},t_{2},t_{3},t_{4}}$ and $\mu_{s_{1},s_{2},s_{3},s_{4}}$ on $(V, \ip, J)$ are isomorphic via a linear map $\varphi$ that preserves the inner product $\ip$ and the complex structure $J$ if and only if the corresponding structures of complex Lie algebras $\mu_{z_{1},z_{2}}$ and $\mu_{w_{1},w_{2}}$ on $(V,\ipd )$ are isomorphic via a linear map $\widehat{\varphi}$ that preserves $\ipd$, where $z_{1}=t_{1}+ \sqrt{-1} t_{2}$, $z_{2}=t_{3}+ \sqrt{-1} t_{4}$, $w_{1}=s_{1}+ \sqrt{-1} s_{2}$ and $w_{2}=s_{3}+ \sqrt{-1} s_{4}$.

Thus, in this case,  we can study the equivalence of anti-Kh\"{a}ler structures  from the equivalence of structures of Lie algebras $\mu_{z_{1},z_{2}}$ on  $(V, \ipd)$ via isomorphisms that preserve the $\mathbb{C}$-symmetric inner product $\ipd$.

We fix $z_{1},z_{2}\in \mathbb{C}$ and take $\varphi \in O(V,\ipd) \cong O(2,\mathbb{C})$. One can verify that
\begin{equation}\label{equiv}
\varphi \cdot \mu_{z_{1},z_{2}}= \mu_{\epsilon w_{1}, \epsilon w_{2}},
\end{equation}
with $w_{1}X+w_{2}Y =\varphi(z_{1}X+z_{2}Y)$ and $\epsilon =\det \varphi = \pm 1$.
Thus, if $\mu_{z_{1},z_{2}}$ and $\mu_{w_{1}, w_{2}}$ are equivalent (isomorphic) then $z_{1}^2 + z_{2}^2 = w_{1}^2 + w_{2}^2$, since $\varphi\in O(V,\ipd)$ .

Now, we suppose that $z_{1},z_{2},w_{1},w_{2}$ are complex numbers such that $z_{1}^2 + z_{2}^2 = w_{1}^2 + w_{2}^2$. We want to prove that its corresponding structures of Lie algebras $\mu_{z_{1},z_{2}}$ and $\mu_{w_{1}, w_{2}}$ are in the same $O(V,\ipd)$-orbit. We have two possibilities:

\noindent \textbf{a.} If $z_{1}^2 + z_{2}^2\neq 0$,  we consider the  following orthogonal bases of $(V,\ipd)$
$$
\mathcal{B}=\{z_{1}X+z_{2}Y, -z_{2}X+z_{1}Y\} \hspace{0.5cm}\text{and} \hspace{0.5cm}
\mathcal{B}'=\{w_{1}X+w_{2}Y, -w_{2}X+w_{1}Y\}
$$
and we take the linear operator $\varphi$ of $V$ which sends $z_{1}X+z_{2}Y$ onto $w_{1}X+w_{2}Y$ and $-z_{2}X+z_{1}Y$ onto $ -w_{2}X+w_{1}Y$. A straightforward computation shows that $\varphi$ preserves square norms, so by polarization we obtain that $\varphi\in O(V,\ipd)$
and furthermore, by (\ref{equiv}), $\mu_{z_{1},z_{2}}$ and $\mu_{w_{1}, w_{2}}$ are equivalent.

\medskip

\noindent \textbf{b.} If $z_{1}^2 + z_{2}^2 = 0$, this implies that
\begin{eqnarray}
(z_{1}, z_{2}) &=& z(1, \sqrt{-1}) \mbox{ or } z(1, -\sqrt{-1}),
\end{eqnarray}
for some $z\in \mathbb{C}$, ($z\neq 0$).

Let us first prove that, given $z\in \mathbb{C}\setminus\{0\}$, there exists $\varphi_{z} \in O(V,\ipd)$ such that
$\varphi_{z} \cdot \mu_{1,\sqrt{-1}}= \mu_{z, z\sqrt{-1}}$. In fact, we consider the bases of the complex vector space $(V,\mathbb{C})$ given by
$$
\mathcal{B}=\{X+\sqrt{-1}Y, X-\sqrt{-1}Y\} \hspace{0.5cm}\text{and} \hspace{0.5cm}
\mathcal{B}'=\{zX+\sqrt{-1}zY, \tfrac{1}{z}X-\tfrac{1}{z}\sqrt{-1}Y\},
$$
and $\varphi_{z}$ the linear operator defined by $\varphi_{z}(X+\sqrt{-1}Y)=zX+z\sqrt{-1}Y$ and $\varphi_{z}(X-\sqrt{-1}Y)=\tfrac{1}{z}X-\tfrac{1}{z}\sqrt{-1}Y$.
It is easy to check that $\varphi_{z}$ is a linear operator preserving the $\mathbb{C}$-symmetric inner product $\ipd$.

Now, given $w\in \mathbb{C}\setminus\{0\}$ we have that $\mu_{1,-\sqrt{-1}}$ and  $\mu_{w, -w\sqrt{-1}}$ are equivalent via $\varphi_{\tfrac{1}{w}}$.

Besides, $\mu_{1,\sqrt{-1}}$ and  $\mu_{1, -\sqrt{-1}}$ are equivalent via $\varphi_{o} \in O(V,\ipd)$, the linear operator whose matrix with respect to the basis $\mathcal{B}=\{X+\sqrt{-1}Y, X-\sqrt{-1}Y\}$ is
\begin{eqnarray*}
[\varphi_{o}]_{\mathcal{B}}=\left[\begin{array}{c c}
0 & -1\\
1 & 0
\end{array}\right].
\end{eqnarray*}

Finally, if $z_{1}^2 + z_{2}^2 = w_{1}^2 + w_{2}^2=0$, we have that
$$
(z_{1}, z_{2})=
\begin{cases}
z(1, \sqrt{-1}) \,\,\,\text{or}\\
z(1, -\sqrt{-1}),
\end{cases}\hspace{0.5cm}\text{and}\hspace{0.5cm}
(w_{1}, w_{2})=
\begin{cases}
w(1, \sqrt{-1})\,\,\,\text{or}\\
w(1, -\sqrt{-1}),
\end{cases}
$$
for certain $z, w\in \mathbb{C} \setminus\{0\}$. Making the suitable compositions of the above linear operators,
we have that if $z_{1}^2 + z_{2}^2 = w_{1}^2 + w_{2}^2=0$, then $\mu_{z_1, z_2}$ and $\mu_{w_1,w_2}$ are isomorphic via $\widehat{\varphi} \in O(V,\ipd)$.
\end{proof}

\subsection{The curvature of the left invariant anti-K\"{a}hler structures in dimension $4$.}
$\mbox{ }$\newline
We consider the anti-K\"{a}hler structures in the family obtained in (\ref{caso1a}).
An easy computation shows that
\begin{eqnarray*}
  \begin{array}{ccc}
    \nabla_{X}X = -a JY,  & \nabla_{X}Y = a JX, & \nabla_{Y}Y = -\varepsilon b JX.
  \end{array}
\end{eqnarray*}
By using that $\nabla$ is torsion free and $\nabla J \equiv 0$, we can compute the remaining
values $\nabla_U V$ with $U,V \in \mathcal{B}:=\{X,JX,Y,JY\}$ and show that
\begin{eqnarray*}
  \nabla_U \nabla_V W & = & \nabla_V \nabla_U W,\, \forall\, U,V,W \in \mathcal{B}.
\end{eqnarray*}
Furthermore, it is fairly easy to show that $\nabla_{[U,V]}W =0$ for all $U,V,W$ in $\mathcal{B}$.
Then, anti-K\"{a}hler structures in the family are flat.

For the anti-K\"{a}hler structures obtained in (\ref{caso2}), we have
\begin{eqnarray*}
  \begin{array}{ccc}
    \nabla_{X}X = -t_1Y - t_{2}JY,  & \nabla_{X}Y = t_1X + t_2JX, & \nabla_{Y}Y = t_3X+t_4JX,
  \end{array}
\end{eqnarray*}
and
\begin{eqnarray*}
  \nabla_U \nabla_V W & = & \nabla_V \nabla_U W,\, \forall\, U,V,W \in \mathcal{B}.
\end{eqnarray*}
Furthermore, we have
\begin{eqnarray*}
  R(X,Y) &=& \left[\begin{array}{rr} 0 &  H\\ -H  & 0\end{array}\right]
\end{eqnarray*}
where
\begin{eqnarray*}
H & = & \left[\begin{array}{rr}
\mathfrak{Re}(\zeta) &  -\mathfrak{Im}(\zeta) \\
\mathfrak{Im}(\zeta) &   \mathfrak{Re}(\zeta) \end{array}\right],
\end{eqnarray*}
here $\zeta = \ipda{z_1X+z_2Y}{z_1X+z_2Y} = z_1^2 + z_2^2$, $z_1:=t_1+t_2\sqrt{-1}$ and $z_2:= t_3+t_4\sqrt{-1}$.
And so, the anti-K\"{a}hler structure is flat if and only if $\zeta = 0$.
Regarding the Ricci curvature tensor, we have that the Ricci operator $\operatorname{Ric}$, $\operatorname{Rc}(\cdot,\cdot)=g(\operatorname{Ric}\cdot,\cdot)$,
is given by the matrix
\begin{eqnarray}
  \operatorname{Ric} & = & -2\left[\begin{array}{rr} H &  0 \\ 0  & H\end{array}\right]
\end{eqnarray}
with respect to the basis $\mathcal{B}$. Hence, the anti-K\"{a}hler structure is Einstein
if and only if $\mathfrak{Im}(\zeta) = 0$, and in that case, the cosmological
constant is $-2\zeta$. Moreover, the anti-K\"{a}hler structure is Ricci flat
if and only if it is flat.


\end{document}